\newif\ifsvjour
\newtheorem{theorem}{Theorem}
\newif\ifsubmit
\date{}
\date{\today}
\pgfplotsset{compat=newest} 
\pgfplotsset{plot coordinates/math parser=false} 
\tikzset{mark size=1/2}
\tikzset{font=\footnotesize}
\newcommand{\includetikz}[3]{\tikzsetnextfilename{#3}\includegraphics[width=#1,height=#2]{#3}}
  \LetLtxMacro{\oldincludegraphics}{\includegraphics}
  \newcommand{\myincludegraphics}[2][]{\tikzsetnextfilename{#2}\oldincludegraphics[#1]{#2}}
  \LetLtxMacro{\includegraphics}{\myincludegraphics}
\LetLtxMacro{\oldtodo}{\todo}
\renewcommand{\todo}[2][]{\tikzexternaldisable\oldtodo[#1]{#2}\tikzexternalenable}
\renewcommand{\d}{\mathrm{d}}
\renewcommand{\v}[1]{\bm{#1}}
\renewcommand{\Re}{\operatorname{Re}}
\renewcommand{\Im}{\operatorname{Im}}
\newcommand{\imbrac}[1]{\Im \left[ #1 \right]}
\newcommand{\tzsup}{^{\text{T}}}
\newcommand{\glsup}{^{\text{G}}}
\newcommand{\opint}{\operatorname{I}}
\newcommand{\opquad}{\operatorname{Q}_{n}}
\newcommand{\oprem}{\operatorname{R}_{n}}
\newcommand{\opquadvar}[1]{\operatorname{Q}_{n,#1}}
\newcommand{\opremvar}[1]{\operatorname{R}_{n,#1}}
\newcommand{\contour}{{\mathcal{C}}}
\newcommand{\Res}{\operatorname{Res}}
\newcommand{\resbrac}[1]{\Res\left[ #1 \right]}
\newcommand{\ordo}[1]{{\mathcal{O}\left(#1\right)}}
\newcommand{\KTH}{{Numerical Analysis, Department of Mathematics,\\
KTH Royal Institute of Technology, 100 44 Stockholm, Sweden 
}}
\title{Estimation of quadrature errors in layer potential evaluation
  using quadrature by expansion}
\journalname{Advances in Computational Mathematics}
\author{Ludvig af Klinteberg \and Anna-Karin Tornberg}
\institute{\KTH}
\newcommand{\email}[1]{\href{mailto:#1}{#1}}
\author{Ludvig af Klinteberg\thanks{Email address: \email{ludvigak@kth.se}} }
\author{Anna-Karin Tornberg}
\affil{\KTH}
\begin{document}

\maketitle

\begin{abstract}
  In boundary integral methods it is often necessary to evaluate layer
  potentials on or close to the boundary, where the underlying
  integral is difficult to evaluate numerically. Quadrature by
  expansion (QBX) is a new method for dealing with such integrals, and
  it is based on forming a local expansion of the layer potential
  close to the boundary.  In doing so, one introduces a new quadrature
  error due to nearly singular integration in the evaluation of
  expansion coefficients.
  Using a method based on contour integration and calculus of
  residues, the quadrature error of nearly singular integrals can be
  accurately estimated.
  This makes it possible to derive accurate estimates for the
  quadrature errors related to QBX, when applied to layer potentials
  in two and three dimensions. As examples we derive estimates for the
  Laplace and Helmholtz single layer potentials.
  These results can be used for parameter selection in practical
  applications.

\ifsvjour
  \keywords{nearly singular, quadrature, layer potential, error estimate}
  \subclass{65D30 \and 65D32 \and 65G99}
\fi
\end{abstract}

\section{Introduction}
At the core of boundary integral equation (BIE) methods for partial differential equations
(PDEs) lies the representation of a solution $u$ as a layer potential,
\begin{align}
  u(\v x) = \int_\Gamma G(\v x, \v y) \sigma(\v y) \d S_{\v y},
  \label{eq:basic_layer_potential}
\end{align}
where $\Gamma$ is a smooth, closed contour (in $\mathbb R^2$) or
surface (in $\mathbb R^3$), $\sigma(\v y)$ is a smooth density defined
on $\Gamma$, and $G(\v x, \v y)$ is a Green's function associated with
the current PDE of interest. The Green's function is typically
singular along the diagonal $\v x=\v y$, which leads to the integrand
being singular for $\v x\in\Gamma$. We will refer to this as a
singular integral. The field that it produces is however smooth on
each side of $\Gamma$. When $\v x$ is close to $\Gamma$, but not on
it, the layer potential is difficult to evaluate accurately using a
numerical method, even though $G$ is smooth. In this case we say that
the integral is \emph{nearly singular}, since we evaluate $G$ close to
its singularity.

In this paper we discuss the use of residue calculus for estimating the error committed
when computing a nearly singular integral using a quadrature method. The error estimates
are derived in the limit $n\to\infty$, $n$ being the number of discrete quadrature points,
but turn out to be accurate also for moderately large $n$. Throughout we shall use the
symbol $\simeq$ to mean ''asymptotically equal to'', such that
\begin{align}
  a(n) \simeq b(n) \quad  \text{ if } \quad \lim_{n\to\infty} \frac{a(n)}{b(n)} = 1.
\end{align}
The discussion is limited to the Gauss-Legendre rule and the
trapezoidal rule, which are perhaps the two most common quadrature
rules in the BIE field. The kernels that we consider are related to a
new method for singular and nearly singular integration, called
quadrature by expansion (QBX)
\cite{Klockner2013,Barnett2014,Epstein2013}. Specifically, we consider
two classes of kernels that appear when applying QBX to the single
layer potential of Laplace's equation in two and three dimensions,
also referred to as the single layer harmonic potential,
\begin{align}
  G(\v x,\v y) =
  \begin{cases}
    \log |\v x - \v y| & \text{ in $\mathbb R^2$}, \\
    |\v x - \v y|^{-1} & \text{ in $\mathbb R^3$}.
  \end{cases}
  \label{eq:sgl_greens_func}
\end{align}

This paper is organized as follows: In section
\ref{sec:quadr-expans-qbx} we introduce QBX and the relevant kernels
for studying the quadrature errors associated with the method. In
section \ref{sec:quadrature-errors} we discuss the required framework
for estimating quadrature errors, and use it to derive error estimates
for our kernels. Finally, in section \ref{sec:applications} we show
how our results can be used to compute quadrature error estimates that
are useful in practical applications. The results provided in section
\ref{sec:applications} are for model geometries; in appendix
\ref{sec:qbx-spheroid} we include results for a more complex geometry.

\section{Quadrature by expansion (QBX)}
\label{sec:quadr-expans-qbx} 

The central principle of QBX \cite{Klockner2013} is the observation
that a layer potential is smooth away from $\Gamma$, such that it
locally can be represented using a Taylor expansion around an
expansion center $\v x_0$. Given a quadrature rule and a target point
$\v x$ on $\Gamma$, the method can be summarized as:
\begin{enumerate}[1.]
\item Pick an expansion center $\v x_0$ at a distance $r$ from $\v x$ in the normal
  direction, such that $\v x_0$ lies in the quadrature rule's region of high accuracy.
\item Compute a local expansion of the potential around $\v x_0$, truncated to some
  expansion order $p$.
\item Evaluate the local expansion at $\v x$.
\end{enumerate}
The expansion is convergent inside the ball of radius $r$ centered at $\v x_0$, and at the
point of intersection of the ball and $\Gamma$ \cite{Epstein2013}, as illustrated in
Figure \ref{fig:qbx_geo}. We can therefore use QBX to compute the potential both when it
is singular and nearly singular, as long as our target point lies inside the domain of
convergence of a local expansion.
\begin{figure}
  \centering  
  \includegraphics[width=0.5\textwidth]{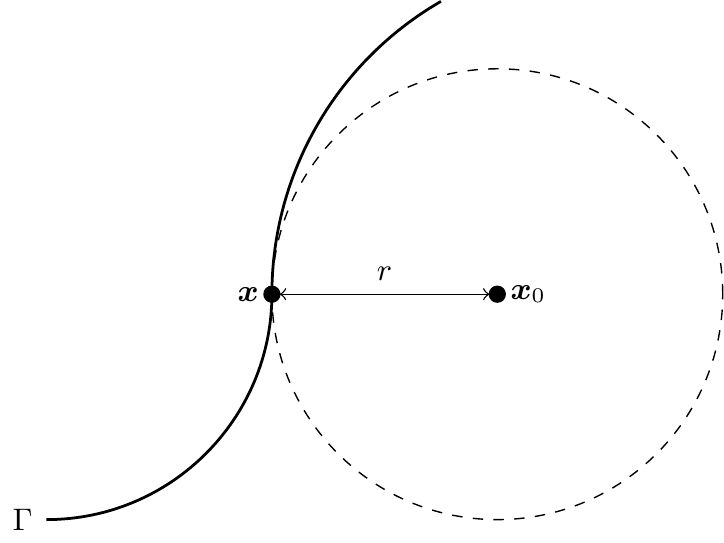}
  \caption{QBX geometry. The local expansion formed at $\v
    x_0$ is valid inside the ball of radius $r$ and at $\v x$.}
  \label{fig:qbx_geo}
\end{figure}

\paragraph{Two-dimensional single layer potential}

In two dimensions it is convenient to let $\mathbb{R}^2=\mathbb{C}$, and work with the
complex version formulation for the single layer potential,
\begin{align}
  u(z) = \Re \int_\Gamma \sigma(w(t))\log(z-w(t)) \d t,
  \label{eq:2d_sgl_layer}
\end{align}
where $w(t)$ is an arc length parametrization of the contour $\Gamma \in
\mathbb{C}$. Forming the local expansion at a center $z_0$ using the series expansion of
$\log(1-\omega)$, $|\omega|<1$, we get \cite{Epstein2013}
\begin{align}
  u(z) = \Re \sum_{j=0}^\infty a_j (z-z_0)^j,
\end{align}
where the expansion coefficients $a_j$ are given by
\begin{align}
  a_0 &=  \int_\Gamma \sigma(w(t)) \log(w(t)) \d t, \\
  a_j &= - \int_\Gamma \frac{\sigma(w(t))}{j(w(t)-z_0)^j} \d t, \quad j>0 .
  \label{eq:def_coeff_2d}
\end{align}
Truncating the expansion to order $p$ and denoting by $\tilde a_j$ the coefficients
computed using a quadrature rule, we define our QBX approximation of the potential as
\begin{align}
  \tilde u(z) = \Re \sum_{j=0}^p \tilde a_j (z-z_0)^j.
\end{align}

\paragraph{Three-dimensional single layer potential}

In three dimensions an expansion of the Green's function about a center $\v x_0$ is formed
using the spherical harmonic addition theorem \cite{Greengard1997},
\begin{align}
  \frac{1}{|\v x - \v y|} = \sum_{l=0}^\infty \frac{4\pi}{2l+1} \sum_{m=-l}^l
  \frac{ |\v x - \v x_0|^l }{ |\v y - \v x_0|^{l+1} }
  Y_l^{-m}(\theta_x,\varphi_x)
  Y_l^{m}(\theta_y,\varphi_y),
\end{align}
where $Y_l^m$ is the spherical harmonic of degree $l$ and order $m$,
\begin{align}
  Y_l^{m}(\theta,\varphi) = \sqrt{\frac{2l+1}{4\pi}\frac{(l-|m|)!}{(l+|m|)!}} 
  P_l^{|m|}(\cos\theta) e^{im\varphi}.
\end{align}
Here $P_l^m$ is the associated Legendre function, and $(\theta_x,\varphi_x)$ and
$(\theta_y,\varphi_y)$ are the spherical coordinates of $\v x-\v x_0$ and $\v y-\v
x_0$. The local expansion is then
\begin{align}
  u(\v x) = \sum_{l=0}^\infty |\v x - \v x_0|^l \sum_{m=-l}^l \alpha_l^m   
  Y_l^{-m}(\theta_x,\varphi_x),
\end{align}
with the expansion coefficients $\alpha_l^m$ given by
\begin{align}
  \alpha_l^m = \frac{4\pi}{2l+1} \int_\Gamma 
  |\v y - \v x_0|^{-l-1}
  Y_l^{m}(\theta_y,\varphi_y)
  \sigma(\v y) \d S_{\v y} .
  \label{eq:def_coeff_3d}
\end{align}
Again truncating the expansion at order $p$ and letting $\tilde\alpha_l^m$ denote
coefficients approximated by quadrature, we get our QBX approximation
\begin{align}
  \tilde u(\v x) = \sum_{l=0}^p |\v x - \v x_0|^l
   \sum_{m=-l}^l \tilde\alpha_l^m 
  Y_l^{-m}(\theta_x,\varphi_x) .
\end{align}

\subsection{Error analysis of QBX}

The error when computing a layer potential using QBX can be divided into two components:
the truncation error and the quadrature error. The truncation error comes from limiting
the local expansion to a finite number of terms, and was analyzed by Epstein et
al. \cite{Epstein2013}. The quadrature error comes from evaluating the integrals
\eqref{eq:def_coeff_2d} and \eqref{eq:def_coeff_3d} using a discrete quadrature rule. To
see the separation of errors we add and subtract the exact expansion coefficients to the
QBX approximation. For the single layer in two dimensions we then get the separation as
\begin{align}
\begin{split}
  u(z) - \tilde u(z) = 
    \underbrace{
      u(z)
      -\Re\sum_{j=0}^p a_j (z-z_0)^j
    }_{\text{truncation error $e_T$}}
    +
    \underbrace{
      \Re\sum_{j=0}^p (a_j - \tilde a_j) (z-z_0)^j
    }_{\text{quadrature error $e_Q$}}    .
\end{split}
\label{eq:qbx_errors_2d}
\end{align}
In \cite[Thm 2.2]{Epstein2013} it is shown that there is a constant $M_{p,\Gamma}$ such
that if $\sigma\in\mathcal{C}^p(\Gamma)$, then
\begin{align}
  |e_T| \le M_{p,\Gamma} \|\sigma\|_{\mathcal{C}^p(\Gamma)} r^{p+1}\log\frac{1}{r} ,
  \label{eq:trunc_err_bound_2d}
\end{align}
where $r$ is the distance from $\Gamma$ to the expansion center. In
three dimensions we similarly have (assuming for the moment $\v
x_0=0$)
\begin{align}
  e_T &= u(\v x) - 
  \sum_{l=0}^p |\v x|^l \sum_{m=-l}^l \alpha_l^m     
  Y_l^{-m}(\theta_x,\varphi_x) ,
  \label{eq:qbx_errors_3d_trunc} \\
  e_Q &=
  \sum_{l=0}^p |\v x|^l \sum_{m=-l}^l 
  (\alpha_l^m - \tilde \alpha_l^m)    
  Y_l^{-m}(\theta_x,\varphi_x)   .
  \label{eq:qbx_errors_3d_quad}
\end{align}
Here a generalization of the results in \cite[Thm 3.1]{Epstein2013} gives that there is a
constant $M_{\Gamma,\delta}$, $\delta>0$, such that if $\sigma$ belongs to the Sobolev
space $H^{3+p+\delta}(\Gamma)$, then
\begin{align}
  |e_T| \le M_{p,\delta} \|\sigma\|_{H^{3+p+\delta}(\Gamma)} r^{p+1}.
  \label{eq:trunc_err_bound_3d}
\end{align}
If we let $h$ be a characteristic length of the discretization of $\Gamma$, and
furthermore keep the ratio $r/h$ constant under grid refinement, then a consequence of the
truncation error estimates is that $e_T$ converges with order $p+1$ under refinement,
\begin{align}
  e_T = \ordo{h^{p+1}} .
\end{align}
In \cite{Epstein2013} and \cite{Klockner2013} it is argued that if the
quadrature error can be maintained at a fixed level $\epsilon$, then
\begin{align}
  |u - \tilde u| = \ordo{\epsilon + h^{p+1}},
\end{align}
such that QBX is convergent with order $p+1$ until hitting the ''floor'' given by
$\epsilon$.

Turning to the quadrature error $e_Q$, one can in practical
applications observe that it grows with the expansion order $p$, such
that there for a given problem exists an optimal $p$ where the total
error $e_T+e_Q$ has a minimum. To control $e_Q$ (and thereby the
minimum error) one can interpolate $\sigma$ to a finer grid before
computing the coefficients, a technique usually referred to as
''upsampling'' \cite{Barnett2014} or ''oversampling''
\cite{Klockner2013}. This works because at large $p$ the difficulties
lie in accurately resolving the kernels in \eqref{eq:def_coeff_2d} and
\eqref{eq:def_coeff_3d}, not $\sigma$ itself. 

The quadrature error $e_Q$ has for the two-dimensional case been
discussed in \cite{Epstein2013} and \cite{Barnett2014}. In
\cite{Epstein2013} an upper bound, which did not explicitly show the
dependence on $p$, was derived for the case when $\Gamma$ is
discretized using Gauss-Legendre panels. In \cite[Thm
3.2]{Barnett2014}, a bound including the $p$-dependence was derived
for $\Gamma$ discretized using the trapezoidal rule.

In what follows of this paper we shall develop quadrature error \emph{estimates}, not
bounds, that accurately predict the order of magnitude of $|a_j-\tilde a_j|$ and
$|\alpha_l^m-\tilde \alpha_l^m|$ in the asymptotic region of $n\to\infty$. We will do this
for the $n$-point trapezoidal and Gauss-Legendre quadrature rules on certain
geometries. To that end, we will consider two different classes of kernels (depicted in
Figure \ref{fig:kernels}),
\begin{align}
  f_p(z) &= \frac{1}{(z-z_0)^p}, &z,z_0 &\in \mathbb{C}, 
  \label{eq:fp_def}
  \\
  g_p(x,y) &= \frac{1}{\left( (x-x_0)^2 + (y-y_0)^2 \right)^p}, &x,y,x_0,y_0 &\in
  \mathbb{R},
  \label{eq:gp_def}
\end{align}
where the respective singularities $z_0$ and $(x_0,y_0)$ of the kernels are assumed to lie
close to, but not on, the interval of integration.  The kernel $f_p$ is relevant when
analyzing the computation of $a_j$, which we will refer to as the complex kernel. The
corresponding kernel for $\alpha_l^m$ is $g_p$, and we will refer to it as the Cartesian
kernel.
\begin{figure}
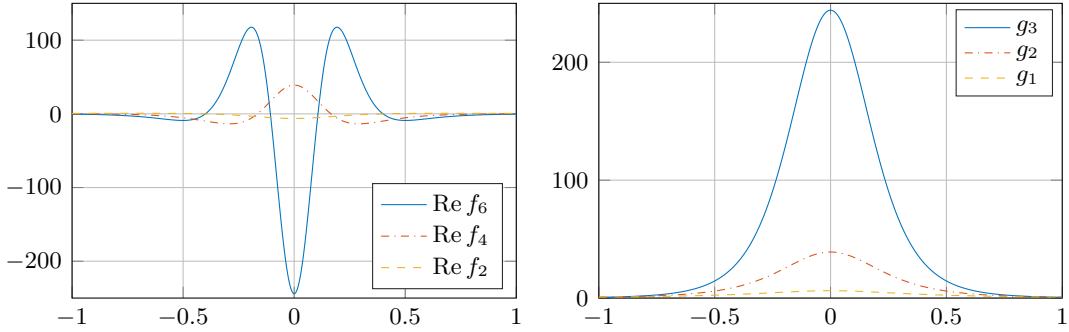

  \centering
  \includetikz{0.48\textwidth}{0.3\textwidth}{kernel_2d}
  \includetikz{0.48\textwidth}{0.3\textwidth}{kernel_3d}
  \caption{Examples of the kernels $f_p$ \eqref{eq:fp_def} and $g_p$ \eqref{eq:gp_def} on
    $[-1,1]$, with $z_0=i y_0$ and $(x_0,y_0)=(0,0.4)$. The real part of the complex kernel
    $f_p$ is shown to illustrate how it oscillates (in absolute value $|f_{2p}=g_p|$ on
    this interval).}
  \label{fig:kernels}
\end{figure}

\section{Estimating quadrature errors}
\label{sec:quadrature-errors}

The purpose of quadrature is to numerically approximate the definite integral of a
function $f$ over some interval $\Gamma$,
\begin{align}
  \opint[f] = \int_\Gamma f(x) \d x,
\end{align}
where $\Gamma$ is typically a subset of the real line or a closed curve in the complex
plane. This approximation is computed through an $n$-point quadrature rule, using a set of
nodes $x_i \in \Gamma$ and weights $w_i$,
\begin{align}
  \opquad[f] = \sum_{i=1}^n f(x_i) w_i .
\end{align}
For an $n$-point interpolatory quadrature rule, the weights $w_i$ are determined by
integrating the polynomials of degree $n-1$ that interpolate $f$ at the nodes $x_i$. The
error of the approximation is given by the remainder term
\begin{align}
  \oprem = \opint - \opquad,
\end{align}
which converges to zero as $n\to\infty$ unless the function is
ill-behaved in some way, e.g. if $f$ has a singularity on $\Gamma$. As
practitioners of quadrature, we typically want to know the
relationship between $\oprem$ and $f$ in order to make our
computations efficient and reliable. Luckily, most quadrature rules
come equipped with a number of error bounds involving $f$ in one way
or another. However, most such bounds are useful only if $f$ is
analytic in the whole complex plane, or in a large region around
$\Gamma$. In section \ref{sec:class-error-estim} we give an example
where an error bound for the Gauss-Legendre rule wildly overestimates
the actual error.

We now consider what happens when $f$ is meromorphic, i.e. analytic
everywhere except for at a set of poles.  The focus of our present
work is the case when $f$ has poles close to $\Gamma$. Then the best
way of obtaining accurate estimates of $\oprem$ is to use contour
integrals in the complex plane, using the theory of Donaldson and
Elliott \cite{Donaldson1972}. The key principle is that the quadrature
rule $\opquad$ can be connected to a rational function $q_n(z)$, which
has simple poles at the quadrature nodes $x_i$, and residues at those
poles equal to the quadrature weights $w_i$. If $\contour$ is a
contour enclosing $\Gamma$, on and within which the complex
continuation of $f$ is analytic, then
\begin{align}
  \opquad[f] = \frac{1}{2\pi i} \int_\contour f(z)q_n(z) \d z .
\end{align}
There also exists a characteristic function $m(z)$ such that we can express the integral
over $\Gamma$ as a contour integral,
\begin{align}
  \opint[f] = \frac{1}{2\pi i} \int_\contour f(z)m(z) \d z.
\end{align}
We define the remainder function
\begin{align}
  k_n(z) = m(z) - q_n(z),
  \label{eq:def_rem_fcn}
\end{align}
which is analytic in the complex plane with $\Gamma$ deleted. Using $k_n$ we can express
the remainder as a contour integral,
\begin{align}
  \oprem[f] = \frac{1}{2\pi i} \int_\contour f(z)k_n(z) \d z .
  \label{eq:rem_contour}
\end{align}
As $|z|$ tends to infinity, $k_n$ tends to zero at least like
$\mathcal O(|z|^{-n})$ for interpolatory quadrature
\cite{Elliott2008}, so by taking $\contour$ to infinity we see that
interpolatory quadrature is exact for polynomials of degree $<n$. If
$f$ has one or more poles $z_j$ enclosed by $\contour$, we deform the
contour integral to include small circles enclosing those poles (see
e.g. the illustration in \cite{Elliott2008}). Letting the radius of
the circles go to zero, the remainder is given by the integral over
$\contour$ minus the residues at the poles,
\begin{align}
  \oprem[f] = \frac{1}{2\pi i} \int_\contour f(z)k_n(z) \d z -
  \sum_{j} \resbrac{ f(z)k_n(z), z_j }.
  \label{eq:rem_contour_res}
\end{align}
For reference we here state the definition of the residue of a
function $g(z)$ which has an order $p$ pole at $z_0$:
\begin{align}
  \resbrac{ g(z), z_0 } = \frac{1}{(p-1)!} \lim_{z \to z_0}
  \frac{\d^{p-1}}{\d z^{p-1}} \left( (z-z_0)^p g(z) \right).
  \label{eq:def_residue}
\end{align}
If the poles of $f$ are close to $\Gamma$, then the remainder
$\oprem[f]$ is typically dominated by the corresponding residues, and
the contribution from the contour integral is negligible. If
$f(z)k_n(z)$ goes to zero as $\contour$ is taken to infinity for some
$n \ge N$, then the remainder is equal to the sum of the residues for
those $n$.

Given a meromorphic integrand $f$ and a quadrature rule $\opquad$, our ability to estimate
$\oprem$ depends on our knowledge of $k_n(z)$ and our ability to compute the residues of
$f(z)k_n(z)$ at the poles. For some quadrature rules we have closed form expressions for
$k_n(z)$, while for others we only have asymptotic estimates valid for large $n$. In what
follows we shall summarize the relevant formulas for the Gauss-Legendre and trapezoidal
quadrature rules and apply them to our model kernels.

\subsection{The Gauss-Legendre quadrature rule}
The $n$-point Gauss-Legendre quadrature rule \cite[ch. 25]{Abramowitz1972} belongs to the
wider class of Gaussian quadrature, and is extensively used in applications where the
integrand is not periodic. It is by convention defined for the interval $[-1,1]$,
\begin{align}
  \opint[f] = \int_{-1}^1 f(x) \d x .
\end{align}
The weights and nodes of the quadrature rule $\opquad$ are associated with $P_n(x)$, the
Legendre polynomial of degree $n$. The nodes are the roots of $P_n$,
\begin{align}
  P_n(x_i) = 0, \quad i=1,\dots,n,
\end{align}
and ordered, such that $x_i < x_{i+1}$. The weights are given by
\begin{align}
  w_i = \frac{2}{(1-x_i^2)P_n'(x_i)^2} .
\end{align}

In our analysis of the kernels $f_p$ and $g_p$ we will stay on the standard interval
$[-1,1]$ on the real axis. Setting
\begin{align}
  z_0 &= a + ib, &
  (x_0, y_0) &= (a,b),
\end{align}
with
\begin{align}
  -\infty < a < \infty, \quad b > 0,
\end{align}
our target kernels are then
\begin{align}
  f_p(z) &= \frac{1}{(z-z_0)^p}, 
  \label{eq:fp_gl}
  \\
  g_p(x) &= \frac{1}{((x-a)^2 + b^2)^p} .
  \label{eq:gp_gl}
\end{align}

\subsubsection{Classic error estimate}
\label{sec:class-error-estim}
There exists a classic error estimate for the Gauss-Legendre rule, available in e.g.
Abramowitz and Stegun \cite[eq. 25.4.30]{Abramowitz1972}, stating that on an interval of
length $L$ the error is given by
\begin{align}
  |\oprem[f]| \le \frac{L^{2n+1} (n!)^4}{(2n+1)[(2n)!]^3} \|f^{(2n)}\|_\infty .
  \label{eq:gl_classic}
\end{align}
The most important interpretation of this result is that $n$-point Gauss-Legendre
quadrature will integrate polynomials of degree up to $2n-1$ exactly. In practice the
estimate is only useful for very smooth integrands, due to the high derivative of $f$ in
the estimate. Consider the example of $f=g_1$ with $a=0$, which can be found in Brass and
Petras \cite{Brass2011},
\begin{align}
  f(x) = \frac{1}{x^2+b^2}, \quad b > 0, \quad \Gamma = [-1,1].
\end{align}
The norm of the derivative is given by
\begin{align}
  \| f^{(2n)} \|_\infty = |f^{(2n)}(0)| = \frac{(2n)!}{b^{2n+2}} .
\end{align}
Inserting this into the estimate and applying Stirling's formula,
\begin{align}
  \sqrt{2\pi}n^{n+\frac 12}e^{-n} < n! <   2\sqrt{\pi}n^{n+\frac 12}e^{-n},
  \label{eq:stirling}
\end{align} 
we get that
\begin{align}
  |\oprem[f]| \le \frac{4\pi}{b^2(2b)^{2n}} .
\end{align}
This bound goes to infinity exponentially fast for $b < 1/2$, while in
practice Gauss-Legendre quadrature exhibits exponential convergence
for this integrand.

There exists a large number of error estimates involving lower order derivatives of the
integrand, available in the work by Brass et
al. \cite{Brass1996,Brass2011}. Alternatively, one can estimate the error through a
contour integral, which is what we will do in the following section.

\subsubsection{Contour integral}
An expression for estimating the error of Gaussian quadrature as a contour integral was
found by Barrett \cite{Barrett1961} for certain cases, and later generalized by Donaldson
and Elliott \cite{Donaldson1972}. The below derivation follows that of Barrett
\cite{Barrett1961}.

It can be shown for Gauss-Legendre quadrature that the weights are given by
\begin{align}
  w_i = \frac{1}{P_n'(x_i)} \int_{-1}^1 \frac{P_n(t)}{t-x_i} \d t,
\end{align}
where $x_i$ are the zeros of $P_n(x)$. The weights can also be
computed as the residues of the function
\begin{align}
  q_n(z) = \frac{1}{P_n(z)} \int_{-1}^1 \frac{P_n(z) - P_n(t)}{z-t} \d t,
\end{align}
at the nodes $x_i$, 
\begin{align}
  w_i = \Res[q_n, x_i] = \lim_{z\to x_i}(z-x_i)q_n(z).
\end{align}
It follows that
\begin{align}
  \opquad[f] = \sum_{i=1}^n f(x_i) w_i = \frac{1}{2\pi i} \int_\contour f(z)q_n(z) \d z,
\end{align}
where $\contour$ now contains $[-1,1]$. It can also be seen that since
\begin{align}
  f(t) = \frac{1}{2\pi i} \int_\contour \frac{f(z)}{z-t} \d z,
\end{align}
we can write
\begin{align}
  \opint[f] = \frac{1}{2\pi i} \int_\contour f(z)m(z) \d z,
\end{align}
where
\begin{align}
  m(z) = \int_{-1}^1 \frac{\d t}{z-t}
\end{align}
It follows that the remainder function $k_n(z)$ in
\eqref{eq:def_rem_fcn}--\eqref{eq:rem_contour_res} is given by
\begin{align}
  k_n(z) = m(z) - q_n(z) = \frac{1}{P_n(z)} \int_{-1}^1 \frac{P_n(t)}{z-t} \d t .
\end{align}
While we do not have a closed form expression for $k_n(z)$, it can in the limit
$n\to\infty$ be shown to satisfy \cite{Barrett1961,Donaldson1972}
\begin{align}
  k_n(z) \simeq \frac{c_n}{(z+\sqrt{z^2-1})^{2n+1}},
  \label{eq:kn_gl}
\end{align}
where 
\begin{align}
  c_n = \frac{2\pi(\Gamma(n+1))^2}{\Gamma(n+1/2)\Gamma(n+3/2)} \simeq 2\pi.
\end{align}
While this is an asymptotic result valid for $n\to\infty$, we shall see that it provides
an accurate approximation of $k_n$ also for moderately large $n$.

In the following analysis we will need derivatives of $k_n$, for the residue at high order
poles. The first two derivatives are
\begin{align}
  k_n'(z) &\simeq k_n(z)\frac{-(2n+1)}{\sqrt{z^2-1}}, \label{eq:kn1}\\
  k_n''(z) &\simeq k_n(z)\left( \frac{(2n+1)^2}{\sqrt{z^2-1}^2} +
    \frac{z(2n+1)}{\sqrt{z^2-1}^3}\right).
  \label{eq:kn2}
\end{align}
From \eqref{eq:kn1} and \eqref{eq:kn2} we can induce that the $q$th derivative of $k_n$
will have the form
\begin{align}
  k_n^{(q)}(z) \simeq k_n(z)\left( \left( -\frac{2n+1}{\sqrt{z^2-1}} \right)^q +
    \mathcal{O}\left( (2n+1)^{q-1} \right) \right),
\end{align}
such that we for large $n$ can use the approximation
\begin{align}
  k_n^{(q)}(z) \simeq k_n(z) \left( -\frac{2n+1}{\sqrt{z^2-1}} \right)^q .
  \label{eq:knq}
\end{align}

\subsubsection{Complex kernel}
\label{sec:gl-2d-kernel}
We now wish to study the Gauss-Legendre rule applied to the complex kernel
\eqref{eq:fp_gl} on $[-1,1]$. The integral is then
\begin{align}
  \opint[f_p] = \int_{-1}^1 \frac{\d x}{(x-z_0)^p},
\end{align}
which has a pole of order $p$ at $z_0$. Letting $\contour$ in
\eqref{eq:rem_contour_res} enclose $[-1,1]$ and $z_0$, the integrand
$f_p(z)k_n(z)$ vanishes as we take $\contour$ to infinity. The
remainder is given by the residue,
\begin{align}
  \oprem[f_p] = -\resbrac{\frac{k_n(z)}{(z-z_0)^p}, z_0} =
  \frac{-k_n^{(p-1)}(z_0)}{(p-1)!}.
  \label{eq:rem_gl_complex_res}
\end{align}
Using the estimate \eqref{eq:knq} for the derivatives of $k_n$, we get the following estimate for the remainder:
\begin{theorem}
  Let $f_p(x)=(x-z_0)^{-p}$ with $z_0=a+ib$, with $-\infty<a<\infty$,
  $b>0$ and $p\in\mathbb{N}$. The magnitude of the remainder when
  using the $n$-point Gauss-Legendre rule to compute $\int_{-1}^1
  f_p(x)\d x$ is then in the asymptotic limit $n\to\infty$ given by
  \begin{align}
    |\oprem[f_p]| \simeq \frac{2\pi}{(p-1)!} 
    \left| \frac{2n+1}{\sqrt{z_0^2-1}} \right|^{p-1}
    \frac{1}{|z_0+\sqrt{z_0^2-1}|^{2n+1}}.
  \end{align}
  \label{thm:gl-complex}
\end{theorem}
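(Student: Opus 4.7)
My plan is to combine the exact residue representation of the remainder in \eqref{eq:rem_gl_complex_res} with the asymptotic estimates for $k_n$ and its derivatives already obtained in \eqref{eq:kn_gl} and \eqref{eq:knq}. Since $z_0=a+ib$ with $b>0$, the pole lies strictly off $[-1,1]$ and off the branch cut of $\sqrt{z^2-1}$, so every quantity appearing in those estimates is well-defined and nonzero at $z_0$. Moreover, since $k_n(z)$ decays at infinity at least like $\mathcal{O}(|z|^{-n})$, the integrand $f_p(z) k_n(z)$ decays like $\mathcal{O}(|z|^{-n-p})$, and for every $n\ge 1$ the outer contour $\contour$ in \eqref{eq:rem_contour_res} can be pushed to infinity with vanishing contribution, leaving only the residue at $z_0$.

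The actual derivation is then three short substitutions. First, I would invoke \eqref{eq:rem_gl_complex_res} to write
\begin{align*}
\oprem[f_p] = -\frac{k_n^{(p-1)}(z_0)}{(p-1)!}.
\end{align*}
Second, I would apply the derivative estimate \eqref{eq:knq} with $q=p-1$ to obtain
\begin{align*}
\oprem[f_p] \simeq -\frac{k_n(z_0)}{(p-1)!}\left(-\frac{2n+1}{\sqrt{z_0^2-1}}\right)^{p-1}.
\end{align*}
Third, I would insert the asymptotic form \eqref{eq:kn_gl} for $k_n(z_0)$, use $c_n \simeq 2\pi$, and finally take absolute values; the signs collapse and the prefactors collect into exactly the expression stated in the theorem.

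The main step requiring care is verifying that $\simeq$ is preserved across these operations. Both \eqref{eq:kn_gl} and \eqref{eq:knq} are ratio-to-one statements as $n\to\infty$ with $p$ fixed, so their product remains asymptotically exact, and passing to the modulus preserves the relation. The one genuine subtlety is the branch choice for $\sqrt{z_0^2-1}$: the asymptotic \eqref{eq:kn_gl} holds with the exterior branch of the Joukowski map, that is, the one for which $|z_0+\sqrt{z_0^2-1}|>1$ whenever $z_0\notin[-1,1]$, and this convention must be maintained consistently. Beyond that bookkeeping I do not anticipate any real obstacle, since the hard analytic work has already been done in establishing the estimates \eqref{eq:kn_gl} and \eqref{eq:knq}.
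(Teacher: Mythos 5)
Your proposal is correct and follows exactly the same route as the paper's own (one-line) proof, namely combining the residue representation \eqref{eq:rem_gl_complex_res} with the asymptotic forms \eqref{eq:kn_gl} and \eqref{eq:knq} and taking absolute values. The additional remarks about the vanishing contour contribution and the branch of $\sqrt{z_0^2-1}$ are sensible bookkeeping that the paper leaves implicit.
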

\begin{proof}
  The proof follows from \eqref{eq:kn_gl},~\eqref{eq:knq} and
  \eqref{eq:rem_gl_complex_res}.
\end{proof}

We have (see e.g. \cite{Barrett1961}) that the factor $|z+\sqrt{z^2-1}|^{2n+1}$ is
constant on an ellipse with foci at $\pm 1$, and consequently that it has a minimum when
$\Re z = 0$. The decay rate for a general $z_0=a+ib$ is therefore bounded by the decay
rate given by $z_0=ib$,
\begin{align}
  \frac{1}{|z_0+\sqrt{z_0^2-1}|^{2n+1}} \le \frac{1}{|b+\sqrt{b^2+1}|^{2n+1}} .
\end{align}
Assuming $b \ll 1$ and discarding $\ordo{b^2}$ terms, we can simplify the result of
Theorem \ref{thm:gl-complex} to 
\begin{align}
  |\oprem[f_p]| \lesssim \frac{2\pi}{(p-1)!}(2n)^{p-1} e^{-2bn},
  \label{eq:gl_est_2d_simple}
\end{align}
where $a(n) \lesssim b(n)$ now denotes ''approximately less than or equal to'' in the
limit $n\to\infty$, in the sense that for a $K(n)$ such that $a(n) \le K(n)$, then
$\lim_{n\to\infty} K(n)/b(n) \approx 1$. Although just an approximation, the result in
\eqref{eq:gl_est_2d_simple} compares very well to numerical experiments, as shown in
Figure \ref{fig:gl_2d}.
\begin{figure}
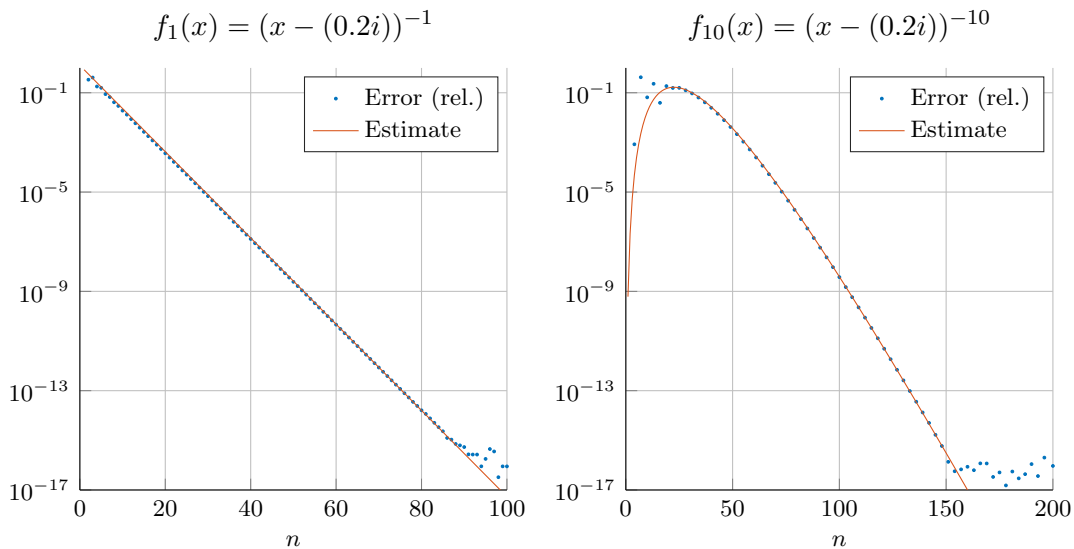

  \centering
  \includetikz{0.48\textwidth}{0.5\textwidth}{gl_2d_p1}
  \includetikz{0.48\textwidth}{0.5\textwidth}{gl_2d_p10}
  \caption{Gauss-Legendre rule quadrature error for $f_p(x)$ on $[-1,1]$, with estimate
    computed using \eqref{eq:gl_est_2d_simple}.}
  \label{fig:gl_2d}
\end{figure}

\subsubsection{Cartesian kernel}
\label{sec:gl-3d-kernel}
Let us now consider the Cartesian kernel \eqref{eq:gp_gl} and the quadrature error when
computing
\begin{align}
  \opint[g_p] = \int_{-1}^1 \frac{\d x}{((x-a)^2+b^2)^p}
\end{align}
using Gauss-Legendre quadrature. The case of $p=1$ was studied by Elliott, Johnston \&
Johnston \cite{Elliott2008}, and following their example we write the complex extension of
$g_p$ as
\begin{align}
  g_p(z) = \frac{1}{((z-a)^2+b^2)^p} = \frac{1}{(z-z_0)^p(z-\overline z_0)^p},
\end{align}
which has poles at $z_0$ and $\overline z_0$,
\begin{align}
  z_0 := a + ib.
\end{align}
Letting the contour $\contour$ enclose $[-1,1]$ and the poles, we see that the integrand
vanishes as we take $\contour$ to infinity, and we are left with a remainder determined by
the residues,
\begin{align}
  \oprem[g_p] = - \sum_{w=\{z_0,\overline z_0\}} \resbrac{
    \frac{k_n(z)}{(z-z_0)^p(z-\overline z_0)^p}, w } ,
  \label{eq:rem_gl_3d_def}
\end{align}
with $k_n$ as defined in \eqref{eq:kn_gl}. These residues are easily evaluated as
\begin{align}
  \resbrac{ \frac{k_n(z)}{(z-z_0)^p(z-\overline z_0)^p}, z_0 } 
  = \frac{1}{(p-1)!}\frac{\d^{p-1}}{\d z^{p-1}}
  \left( \frac{k_n(z)}{(z-\overline z_0)^p} \right)_{z=z_0}.
  \label{eq:def_Res(fp)}
\end{align}
Carrying out the full computations for
$p=1,2,3$, we get
\begin{align}
  \oprem[g_1] &= -\frac{1}{b}\Im[k_n(z_0)],  \\
  \oprem[g_2] &= -\frac{\Re \left[k_n'(z)\right]}{2 b^2} + \frac{\Im\left[k_n(z)\right]}{2 b^3}, \\
  \oprem[g_3] &= -\frac{ \Im[ k_n''(z_0)]}{8b^3} -\frac{3\Re[ k_n'(z_0)]}{8b^4} + \frac{3i\Im[ k_n(z_0)] }{8b^5}. \label{eq:gl_est_f3}
\end{align}
For large $n$, the dominating term will be the one with the highest $k_n$-derivative,
so we can make the approximation
\begin{align}
  \resbrac{ g_p(z)k_n(z),z_0 } \simeq \frac{1}{(p-1)!}\frac{k_n^{(p-1)}(z_0)}{(z_0-\overline
    z_0)^p} .
  \label{eq:Res(fp)}
\end{align}
We combine \eqref{eq:knq}, \eqref{eq:Res(fp)}, $z_0-\overline z_0=2ib$ and $k_n(\overline
z_0) = \overline{k_n(z_0)}$, to get
\begin{theorem}
  Let $g_p(x)=((x-a)^2+b^2)^{-p}$ with $-\infty<a<\infty$, $b>0$ and
  $p\in\mathbb{N}$, and let $\oprem[g_p]$ denote the remainder when
  using the $n$-point Gauss-Legendre rule to compute $\int_{-1}^1
  g_p(x)\d x$. Then, in the asymptotic limit $n\to\infty$,
  \begin{align}
    |\oprem[g_p]| \simeq
    \frac{2}{(p-1)!(2b)^p}
    \times
    \begin{cases}
      |\Im[k_n^{(p-1)}(z_0)]| &\text{if $p$ odd}, \\
      |\Re[k_n^{(p-1)}(z_0)]| &\text{if $p$ even},
    \end{cases}
    \label{eq:gl_est_final}
  \end{align}
  where $z_0=a+ib$ and
  \begin{align}
    k_n^{(q)}(z) \simeq \left( -\frac{2n+1}{\sqrt{z^2-1}} \right)^q
    \frac{2\pi}{(z+\sqrt{z^2-1})^{2n+1}}.
  \end{align}
  \label{thm:gl-cartesian}
\end{theorem}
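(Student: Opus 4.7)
The plan is to combine the residue expression \eqref{eq:rem_gl_3d_def} with the leading-order approximation \eqref{eq:Res(fp)}, and then use the Schwarz reflection symmetry of $k_n$ to package the two residues at $z_0$ and $\overline{z_0}$ into a single real or imaginary part, depending on the parity of $p$.

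First, I would write the remainder as
\begin{align*}
  \oprem[g_p] = -\resbrac{\frac{k_n(z)}{(z-z_0)^p(z-\overline{z_0})^p}, z_0}
   -\resbrac{\frac{k_n(z)}{(z-z_0)^p(z-\overline{z_0})^p}, \overline{z_0}},
\end{align*}
and then apply the large-$n$ approximation \eqref{eq:Res(fp)} to each residue. This replaces each residue with $\frac{k_n^{(p-1)}(\cdot)}{(p-1)!\,(\Delta z)^p}$, where $\Delta z = z_0-\overline{z_0} = 2ib$ in the first residue and $\Delta z = \overline{z_0}-z_0 = -2ib$ in the second.

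Next, since the quadrature nodes of Gauss-Legendre are real and $k_n$ has a real integral representation on $[-1,1]$, the Schwarz reflection principle gives $k_n^{(p-1)}(\overline{z_0}) = \overline{k_n^{(p-1)}(z_0)}$. Setting $w := k_n^{(p-1)}(z_0)$, the sum of the two residues becomes
\begin{align*}
  \frac{1}{(p-1)!\,(2ib)^p}\Bigl[w + (-1)^p \overline{w}\Bigr],
\end{align*}
using $(-2ib)^p = (-1)^p(2ib)^p$. For $p$ even the bracket equals $2\Re[w]$, and for $p$ odd it equals $2i\,\Im[w]$. Taking absolute values and using $|(2ib)^p| = (2b)^p$ yields
\begin{align*}
  |\oprem[g_p]| \simeq \frac{2}{(p-1)!(2b)^p}\times
  \begin{cases}
    |\Im[k_n^{(p-1)}(z_0)]| & p \text{ odd},\\
    |\Re[k_n^{(p-1)}(z_0)]| & p \text{ even},
  \end{cases}
\end{align*}
which is the claimed formula. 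The stated expression for $k_n^{(q)}(z)$ is then just a restatement of \eqref{eq:kn_gl} and \eqref{eq:knq}.

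The only nontrivial step is the passage to \eqref{eq:Res(fp)}: expanding the $(p-1)$-st derivative of $k_n(z)/(z-\overline{z_0})^p$ by the Leibniz rule produces terms where some derivatives fall on the algebraic factor $(z-\overline{z_0})^{-p}$ and the remaining derivatives on $k_n$. By \eqref{eq:knq}, each derivative of $k_n$ contributes a factor of order $n$, while each derivative of the algebraic factor contributes only a factor of order $1/b$; hence for $n \to \infty$ the term where all derivatives hit $k_n$ dominates, and the other Leibniz terms are subleading in $n$. This is the crux of the argument; the rest is algebraic bookkeeping.
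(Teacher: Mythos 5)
Your proposal is correct and follows essentially the same route as the paper: the remainder as the sum of the two residues at $z_0$ and $\overline{z_0}$, the leading-order approximation \eqref{eq:Res(fp)} keeping only the highest $k_n$-derivative, and the Schwarz-reflection identity $k_n(\overline{z_0})=\overline{k_n(z_0)}$ combined with $(\pm 2ib)^p$ to produce the real/imaginary-part dichotomy by parity of $p$. Your Leibniz-rule justification of why the highest $k_n$-derivative dominates is a welcome elaboration of a step the paper only asserts informally.
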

\begin{proof}
  The proof follows from \eqref{eq:rem_gl_3d_def}, \eqref{eq:Res(fp)}
  and \eqref{eq:knq}.
\end{proof}
\begin{figure}
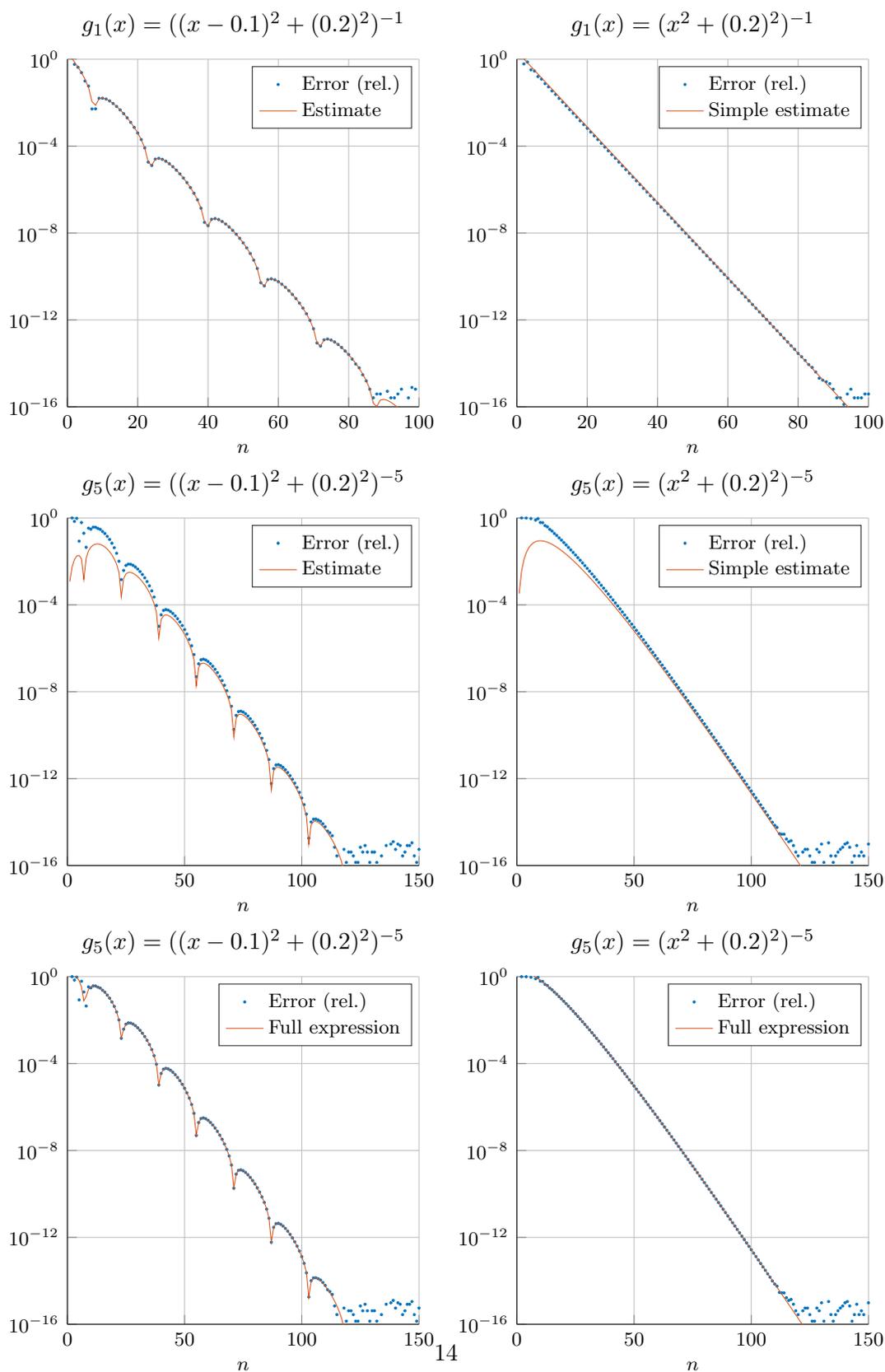

  \centering
  \includetikz{0.48\textwidth}{0.5\textwidth}{gl_3d_p1_est}
  \includetikz{0.48\textwidth}{0.5\textwidth}{gl_3d_p1_est_a0}  \\
  \includetikz{0.48\textwidth}{0.5\textwidth}{gl_3d_p5_est}
  \includetikz{0.48\textwidth}{0.5\textwidth}{gl_3d_p5_est_a0}  \\
  \includetikz{0.48\textwidth}{0.5\textwidth}{gl_3d_p5_full}
  \includetikz{0.48\textwidth}{0.5\textwidth}{gl_3d_p5_full_a0}
  \caption{Gauss-Legendre quadrature error for $g_p(x)$ on $[-1,1]$, ''Full
    expression'' is computed using \eqref{eq:rem_gl_3d_def}--\eqref{eq:def_Res(fp)},
    ''Estimate'' using \eqref{eq:gl_est_final} and ''Simple estimate'' using
    \eqref{eq:gl_est_a0}.}
  \label{fig:gl_3d}
\end{figure}
Repeating the argument of section \ref{sec:gl-2d-kernel}, we can by assuming $b \ll 1$
estimate the largest error for all $a$ as
\begin{align}
  |\oprem[g_p]| \lesssim
  \frac{2\pi}{(p-1)! b^p}
  n^{p-1} e^{-2bn},
  \label{eq:gl_est_a0}
\end{align}
which provides a relatively clear view of how the error depends on $p$, $b$ and $n$.
Figure \ref{fig:gl_3d} shows experimental results using both this estimate and Theorem
\ref{thm:gl-cartesian}, as well as the results obtained when including all terms of the
differentiation in \eqref{eq:def_Res(fp)}. The left column of Figure \ref{fig:gl_3d} shows
results for $a \ne 0$, in which case the error has an oscillatory behavior in $n$. These
oscillations are bounded by the case $a=0$, shown to the right. The top row shows results
for $p=1$, where our estimates are very accurate. The center row shows that our estimates
lose accuracy for small $n$ at $p=5$, though that loss can be recovered by including all
derivatives, as in the bottom row.

\subsection{The trapezoidal rule}

For periodic integrands, the trapezoidal rule is typically the quadrature rule of
choice. It is easy to implement, has an even point distribution and converges
exponentially fast. In our analysis we will assume the interval of integration to be the
unit circle, parametrized in an arc length parameter $t$. Without loss of generality we
can then assume the singularity to lie on the $x$-axis at a distance $b$ from the
boundary,
\begin{align}
  z_0 &= 1+b, &
  (x_0,y_0) &= (1+b,0),
\end{align}
with $b > 0$. Our target kernels are then
\begin{align}
  f_p(z(t)) &= \frac{1}{(z(t)-z_0)^p}, 
  \label{eq:fp_trapz} \\
  g_p(x(t),y(t)) &= \frac{1}{((x(t)-x_0)^2 + y(t)^2)^p} 
  \label{eq:gp_trapz}.
\end{align}
The kernel $f_1$ corresponding to $p=1$ is essentially that of the
Laplace double layer potential. A bound on the quadrature error for
this potential was derived in \cite[Thm 2.3]{Barnett2014} for $\Gamma$
a general curve discretized using the trapezoidal rule. This bound
corresponds to the result in Theorem \ref{thm:trapz-complex} (below)
for $p=1$ and $\Gamma$ the unit circle.

\subsubsection{Contour integral}

We here state the necessary results for error estimation using contour
integrals for two cases: integral over a periodic interval and
integral over a circle in the complex plane. These results and more
can be found in the thorough review by Trefethen and Weideman
\cite{Trefethen2014}.  It is worth noting that the remainder functions
\eqref{eq:kn_trapz_per} and \eqref{eq:kn_trapz_circle} for the
trapezoidal rule are exact, in contrast to the asymptotic results used
in the Gauss-Legendre case.

\paragraph{Integral over a periodic interval}
For a $2\pi$-periodic function $f$ we approximate the integral
$\opint[f]=\int_0^{2\pi}f(x)\d x$ using the trapezoidal rule
\begin{align}
  \opquad[f] = \frac{2\pi}{n}\sum_{k=1}^n f(x_k), \quad x_k=2\pi k/n.
\end{align}
To compute $\oprem[f]$, we let $\contour$ be the rectangle $[0,2\pi] \pm ia$, $a > 0$,
traversed in the positive direction. The sides of rectangle cancel due to periodicity, so
we need only consider the top and bottom lines. The appropriate remainder function is
given by \cite{Trefethen2014}
\begin{align}
  k_n(z) = 2\pi i
  \begin{cases}
    \frac{-1}{e^{-inz}-1} & \Im z > 0,  \\
    \frac{1}{e^{inz}-1} & \Im z < 0 .
  \end{cases}
  \label{eq:kn_trapz_per}
\end{align}

\paragraph{Integral over circle in the complex plane}
For a function $f(z)$ on the unit circle we have $z=e^{it}$, such that
\begin{align}
 \opint[f] = \int_0^{2\pi} f(e^{it}) \d t
 ,%
\end{align}
and
\begin{align}
  \opquad[f] = \frac{2\pi}{n}\sum_{k=1}^n f(z_k), \quad z_k = e^{2\pi i k/n}.
\end{align}
For the contour integral we let $\contour$ be the circle $|z| = r > 1$, and the remainder
function is then given by \cite{Trefethen2014}
\begin{align}
  k_n(z) = \frac{-2\pi}{z(z^n-1)} .
  \label{eq:kn_trapz_circle}
\end{align}

\subsubsection{Complex kernel}
Integrating $f_p$ \eqref{eq:fp_trapz} on the unit circle in the complex plane, we wish to compute
\begin{align}
  \opint[f_p] = \int_0^{2\pi} \frac{\d t}{\left( z(t) - z_0 \right)^p},
\end{align}
with $z(t)=e^{it}$ and $z_0=1+b$. Letting $\contour$ enclose the unit circle and
$z_0$, we can compute the remainder of the trapezoidal rule as
\begin{align}
  \oprem[f_p] = \frac{1}{2\pi i} \int_\contour \frac{k_n(z)}{(z-z_0)^p} \d z
  - \resbrac{ \frac{k_n(z)}{(z-z_0)^p}, z_0 },
\end{align}
with $k_n$ as defined in \eqref{eq:kn_trapz_circle}. Taking $\contour$ to infinity the
integral vanishes, and the error is given by the residue at $z_0$, where there is a pole
of order $p$ such that
\begin{align}
  \oprem[f_p] = -\frac{1}{(p-1)!} k_n^{(p-1)}(1+b).
  \label{eq:rem_fp}
\end{align}
If we evaluate the derivative analytically, this expression is exact. For large $n$ we can
estimate $k_n$ as,
\begin{align}
  k_n(z) \simeq -2\pi z^{-(n+1)}
\end{align}
such that
\begin{align}
  k_n^{(p-1)} \simeq -2 \pi (-1)^{p-1} \frac{(n+1)\cdots(n+p-1)}{(p-1)!} z^{-(n+p)}.
  \label{eq:trapz_knq}
\end{align}
We can simplify the product in the numerator through
\begin{align}
  (n+1)\cdots(n+p-1) \simeq (n+p)^{p-1}.
  \label{eq:simple_prod}
\end{align}
Putting it all together, we get the following result:
\begin{theorem}
  Let $f_p(z)=(z-z_0)^{-p}$ with $p\in\mathbb{N}$, $|z_0|=1+b$ and $b>0$, and let
  $R_n[f_p]$ be the quadrature error when computing $\int_0^{2\pi}f_p(e^{it})\d t$ using
  the $n$-point trapezoidal rule. In the limit $n\to\infty$ we then have that
  \begin{align}
    |\oprem[f_p]| \simeq 2\pi\frac{(n+p)^{p-1}}{(p-1)!} (1+b)^{-(n+p)} .
    \label{eq:thm-trapz-complex}
  \end{align}  
  \label{thm:trapz-complex}
\end{theorem}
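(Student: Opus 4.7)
The plan is to assemble the theorem from the ingredients already on the table in the paragraphs immediately preceding it, and essentially just verify that the asymptotic approximations compose correctly. First, I would invoke the general contour representation (\ref{eq:rem_contour_res}) specialized to the trapezoidal rule on the unit circle, using the \emph{exact} remainder function $k_n(z)=-2\pi/(z(z^n-1))$ from (\ref{eq:kn_trapz_circle}). With $\contour$ taken to enclose both the unit circle and the single pole $z_0$, the integrand $f_p(z)k_n(z)$ decays like $|z|^{-(n+p+1)}$ at infinity, so taking $\contour\to\infty$ kills the contour integral and leaves only the residue contribution, which is the identity (\ref{eq:rem_fp}): $\oprem[f_p]=-k_n^{(p-1)}(z_0)/(p-1)!$.

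Next I would carry out the large-$n$ asymptotics of $k_n^{(p-1)}(z_0)$. Since $|z_0|=1+b>1$, the denominator satisfies $z^n-1\sim z^n$, so $k_n(z)\simeq -2\pi z^{-(n+1)}$ uniformly on a neighborhood of $z_0$. Differentiating $p-1$ times,
\begin{align*}
k_n^{(p-1)}(z)\simeq -2\pi\,(-1)^{p-1}(n+1)(n+2)\cdots(n+p-1)\,z^{-(n+p)},
\end{align*}
which is valid because the $1/(z^n-1)$ correction contributes only lower-order terms in $n$ under each differentiation, provided $|z|>1$. Substituting into the residue formula and taking absolute values (noting WLOG $z_0=1+b$, so $|z_0|=1+b$) gives
\begin{align*}
|\oprem[f_p]|\simeq \frac{2\pi\,(n+1)(n+2)\cdots(n+p-1)}{(p-1)!}(1+b)^{-(n+p)}.
\end{align*}

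Finally, I would apply the asymptotic identification (\ref{eq:simple_prod}), namely $(n+1)(n+2)\cdots(n+p-1)\simeq(n+p)^{p-1}$ as $n\to\infty$ with $p$ fixed, which follows directly from $\lim_{n\to\infty}\prod_{k=1}^{p-1}(n+k)/(n+p)=1$. This yields exactly (\ref{eq:thm-trapz-complex}).

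The only genuine bookkeeping step is verifying that replacing $k_n(z)$ by its leading part $-2\pi z^{-(n+1)}$ commutes with taking $p-1$ derivatives \emph{up to terms $\simeq$-equivalent to the leading one}; this is the one place to be careful. It follows from expanding $1/(z^n-1)=z^{-n}\sum_{m\ge 0}z^{-mn}$ in a neighborhood of $z_0$: differentiation of the $m=0$ term dominates each higher $m\ge 1$ by a factor $(1+b)^{-n}$, which is exponentially small in $n$, so the relative error stays $\simeq 1$. With that checked, all the remaining steps are algebraic and deliver the stated asymptotic equivalence.
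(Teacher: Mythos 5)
Your proposal is correct and follows the paper's own argument exactly: the residue identity \eqref{eq:rem_fp}, the large-$n$ asymptotics \eqref{eq:trapz_knq} of the exact remainder function \eqref{eq:kn_trapz_circle}, and the product simplification \eqref{eq:simple_prod}. The extra care you take in justifying that differentiation commutes with replacing $k_n$ by its leading term is a welcome detail the paper leaves implicit, but it does not change the route.
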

\begin{proof}
  The proof follows from \eqref{eq:rem_fp}, \eqref{eq:trapz_knq} and
  \eqref{eq:simple_prod}.
\end{proof}

The expression in Theorem \ref{thm:trapz-complex} gives a very good
estimate of the error. In Figure \ref{fig:trapz_2d} we compare it to
numerical results for $p=1$ and $p=10$, and in both cases it captures
the region of exponential convergence very well.

\begin{figure}
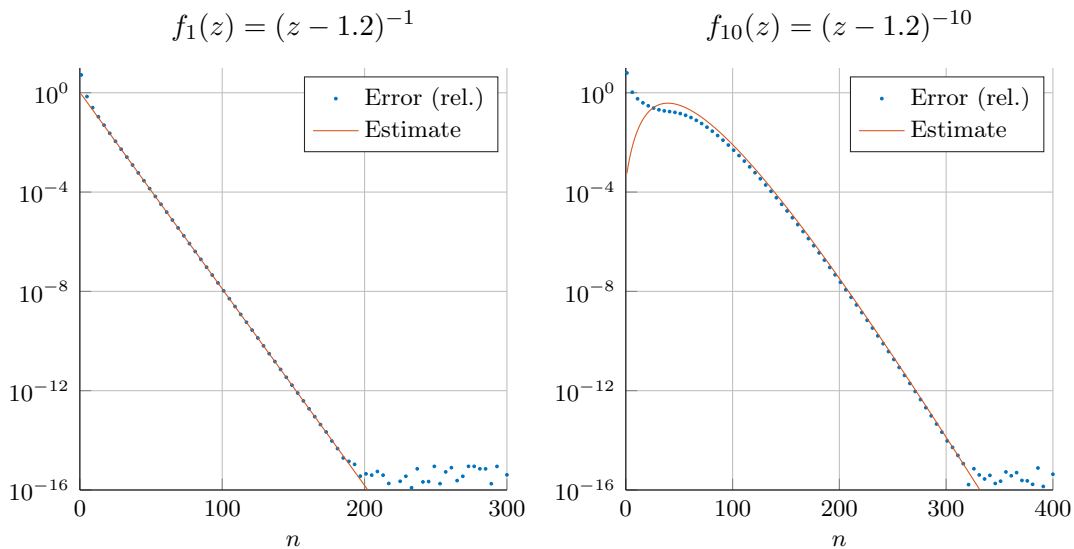

  \centering
  \includetikz{0.48\textwidth}{0.5\textwidth}{trapz_2d_p1}
  \includetikz{0.48\textwidth}{0.5\textwidth}{trapz_2d_p10}
  \caption{Trapezoidal rule quadrature error for $f_p(z)$ on the unit circle, compared to
    the estimate in Theorem \ref{thm:trapz-complex}.}
  \label{fig:trapz_2d}
\end{figure}

\subsubsection{Cartesian kernel}
\label{sec:trapz-cartesian-kernel}

Integrating $g_p$ \eqref{eq:gp_trapz} on the unit circle with $(x_0,y_0)=(1+b,0)$, our target integral is
\begin{align}
  \opint[g_p] = \int_0^{2\pi} \frac{\d t}{\left( (\cos t - x_0)^2 + \sin^2 t \right)^p},
\end{align}
where $t$ is an arc-length parameter describing the unit circle. Letting $g_p(z)$ be the
complex extension of $g_p(t)$, we can write
\begin{align}
  g_p(z) = \frac{1}{\left( 1 + x_0^2 - 2 x_0 \cos z \right)^p} .
\end{align}
This function has poles at $z_0$ and $\overline z_0$,
\begin{align}
  z_0 = i \log x_0 = i \log(1+b) .
  \label{eq:gl_trapz_z0}
\end{align}
It is also periodic on the interval $[0,2\pi]$, so we let $\contour$ be the rectangle
$[0,2\pi] \pm ia$, $a>0$, traversed in the positive direction, and take the remainder
function as defined in \eqref{eq:kn_trapz_per}. Letting $a$ go to infinity the
contribution from the contour vanishes, and the remainder is given by the residues at the
poles,
\begin{align}
  \oprem[g_p] = - \sum_{w=\{z_0,\overline z_0\}} \resbrac{
    g_p(z)k_n(z), w
  } .
  \label{eq:gp_trapz_rem}
\end{align}
To compute the residue at $z_0$ we begin with the definition \eqref{eq:def_residue}
\begin{align}
  \mathrm{Res}\left[ g_pk_n, z_0 \right] 
  = \frac{1}{(p-1)!} \lim_{z \to z_0} \frac{\d^{p-1}}{\d z^{p-1}}\left( 
    \frac{(z-z_0)^{p}}{(1 + x_0^2 - 2 x_0 \cos z)^p} k_n(z) 
  \right).
  \label{eq:gp_trapz_res}
\end{align}
Taking the limit for the first few $p$ we note that the residues from $z_0$ and $\overline
z_0$ are equal, such that we can express the remainder as
\begin{align}
  \oprem[g_1] &= \frac{2i k_n(z_0)}{b (b+2)}, \\
  \oprem[g_2] &= \frac{2 k_n'(z_0)}{b^2 (b+2)^2}+\frac{2 i (b (b+2)+2) k_n(z_0)}{b^3 (b+2)^3}, \\
  \oprem[g_3] &= -\frac{i k_n''(z_0)}{b^3 (b+2)^3} +\frac{3 (b (b+2)+2) k_n'(z_0)}{b^4
    (b+2)^4} + \mathcal O(k_n(z_0)) .
\end{align}
When $n$ is large we can estimate the remainder function as
\begin{align}
  k_n(z) \simeq 2\pi i
  \begin{cases}
    -e^{inz} & \Im z > 0,  \\
    e^{-inz} & \Im z < 0,
  \end{cases}
\end{align}
such that
\begin{align}
  k_n^{(q)}(z_0) \simeq -2\pi i (in)^q (1+b)^{-n} .
  \label{eq:gp_trapz_knq}
\end{align}
For large $n$ the remainder will be dominated by the highest derivative of $k_n$, so we can simplify to get the following result:

\begin{theorem}
  Let $g_p(x,y)=((x-x_0)^2+y^2)^{-p}$ with $p\in\mathbb{N}$, $x_0=1+b$ and $b>0$, and let
  $|\oprem[g_p]|$ be the quadrature error when computing $\int_0^{2\pi} g_p(\cos t,\sin
  t)\d t$ using the $n$-point trapezoidal rule. For $n\to\infty$ the error is then
  asymptotically given by
  \begin{align}
    |\oprem[g_p]| \simeq \frac{4\pi}{(p-1)!(b^2+2b)^p} \frac{n^{p-1}}{(1+b)^n} .
    \label{eq:thm-trapz-cartesian}
  \end{align}
  \label{thm:trapz-cartesian}
\end{theorem}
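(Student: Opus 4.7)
The plan is to combine the three ingredients that the paper has already assembled: the residue representation of the remainder for periodic trapezoidal quadrature, the factorization of $g_p$ that exposes its poles, and the asymptotic formula for derivatives of $k_n$ at $z_0$. The starting point is the exact identity \eqref{eq:gp_trapz_rem}, so the task reduces to obtaining a sharp asymptotic for the pole of order $p$ at $z_0=i\log(1+b)$ (and its conjugate).

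First I would rewrite the denominator of $g_p$ using the factorization
\begin{align}
  1+x_0^2-2x_0\cos z = (1-x_0 e^{iz})(1-x_0 e^{-iz}),
\end{align}
which shows that $g_p(z)=(1-x_0 e^{iz})^{-p}(1-x_0 e^{-iz})^{-p}$ and locates the poles at $z_0=i\log x_0$ and $\overline z_0$. Near $z_0$ the first factor has a simple zero with local expansion $1-x_0 e^{iz}=-i(z-z_0)[1+\ordo{z-z_0}]$, while the second factor is analytic and evaluates to $1-x_0^2=-(b^2+2b)$. Define $h(z):=(z-z_0)^p g_p(z)$; then $h$ is analytic at $z_0$ with
\begin{align}
  h(z_0)=\frac{1}{(-i)^p(1-x_0^2)^p}=\frac{(-i)^p}{(b^2+2b)^p}.
\end{align}

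Next I would apply Leibniz's rule to the residue formula \eqref{eq:gp_trapz_res}, writing $(z-z_0)^p g_p(z)k_n(z)=h(z)k_n(z)$ and expanding the $(p-1)$st derivative. Because the estimate \eqref{eq:gp_trapz_knq} gives $k_n^{(q)}(z_0)=\ordo{n^q (1+b)^{-n}}$, the term in which all $p-1$ derivatives land on $k_n$ dominates, contributing $\frac{1}{(p-1)!}h(z_0)k_n^{(p-1)}(z_0)$; every other Leibniz term is smaller by at least a factor $1/n$. Substituting $k_n^{(p-1)}(z_0)\simeq -2\pi i(in)^{p-1}(1+b)^{-n}$ and tracking the powers of $i$ (using $(-i)^p i^{p-1}\cdot(-2\pi i)=-2\pi$ after simplification, which is real) yields
\begin{align}
  \resbrac{g_p(z)k_n(z),z_0}\simeq\frac{-2\pi}{(p-1)!(b^2+2b)^p}\,n^{p-1}(1+b)^{-n}.
\end{align}

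Finally I would use the Schwarz reflection principle: since $g_p$ is real on the real axis and $k_n$ satisfies $k_n(\overline z)=\overline{k_n(z)}$ on each half-plane branch of \eqref{eq:kn_trapz_per}, the residue at $\overline z_0$ is the complex conjugate of the one at $z_0$; the leading contribution being real, the two residues agree asymptotically and double up. Inserting this into \eqref{eq:gp_trapz_rem} and taking absolute values gives the claimed asymptotic \eqref{eq:thm-trapz-cartesian}. The main obstacle is the careful bookkeeping in the Leibniz expansion — verifying that the omitted mixed terms really are $\ordo{n^{p-2}(1+b)^{-n}}$ relative to the leading one — together with the somewhat fiddly reconciliation of the signs and factors of $i$ so that the constant $4\pi$ emerges cleanly after summing the two residue contributions.
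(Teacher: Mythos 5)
Your proposal is correct and follows essentially the same route as the paper: the exact residue representation \eqref{eq:gp_trapz_rem} at the poles $z_0=i\log(1+b)$ and $\overline z_0$, retention of only the highest $k_n$-derivative term, and the asymptotic \eqref{eq:gp_trapz_knq}. The only difference is cosmetic but welcome: where the paper infers the general-$p$ constant from the explicit residues at $p=1,2,3$, you derive it for all $p$ at once via the factorization $1+x_0^2-2x_0\cos z=(1-x_0e^{iz})(1-x_0e^{-iz})$ and a Leibniz expansion, and your sign bookkeeping (yielding $h(z_0)k_n^{(p-1)}(z_0)/(p-1)!\simeq -2\pi n^{p-1}(1+b)^{-n}/((p-1)!(b^2+2b)^p)$, doubled by conjugate symmetry) checks out against the paper's $p=1,2$ formulas.
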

\begin{proof}
  The proof follows from \eqref{eq:gp_trapz_rem},
  \eqref{eq:gl_trapz_z0}, \eqref{eq:gp_trapz_res} and
  \eqref{eq:gp_trapz_knq}.
\end{proof}

\begin{figure}
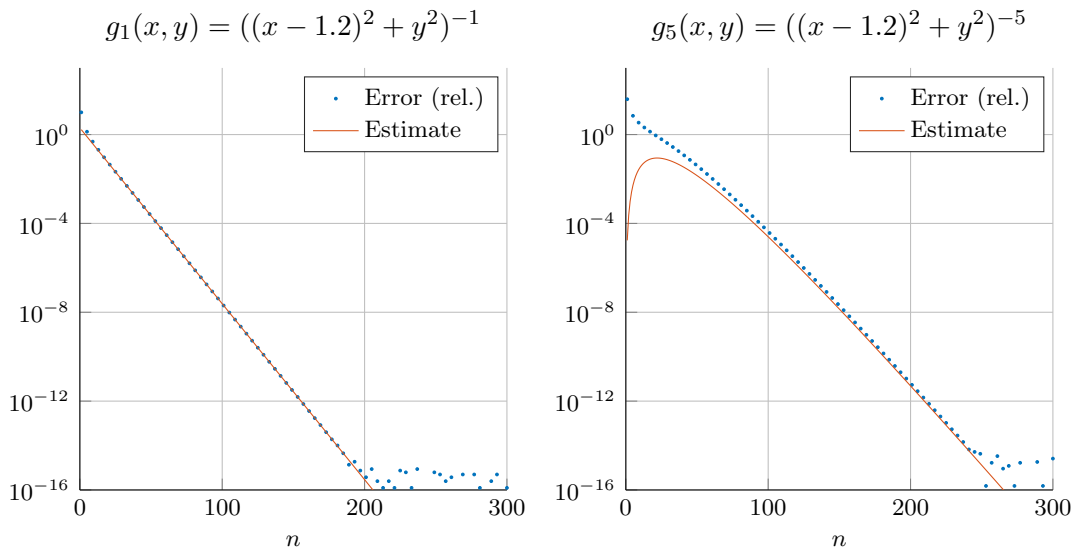

  \centering
  \includetikz{0.48\textwidth}{0.5\textwidth}{trapz_3d_p1}
  \includetikz{0.48\textwidth}{0.5\textwidth}{trapz_3d_p5}
  \caption{Trapezoidal rule quadrature error for $g_p(z)$ on the unit
    circle, with estimate given by Theorem \ref{thm:trapz-cartesian}.}
  \label{fig:trapz_3d}
\end{figure}

Figure \ref{fig:trapz_3d} shows the estimate of Theorem
\ref{thm:trapz-cartesian} applied for $p=1$ and $p=5$ with
$b=0.2$. The exponential convergence is well captured, though at low
$n$ and large $p$ some accuracy is sacrificed by our
simplifications. If the point $(x_0,y_0)$ is not on the $x$-axis, but
still on the circle of radius $1+b$, then the error has wiggles
similar to those present in Figure \ref{fig:gl_3d}. The result in
Theorem \ref{thm:trapz-cartesian} then follows the maximum of those
wiggles.

\subsection{Comments}
\label{sec:comments}

We now have asymptotically accurate estimates of $\oprem[f_p]$ and $\oprem[g_p]$ for the
trapezoidal and Gauss-Legendre rules on simple model geometries (unit circle and line
segment). The derivations of these four estimates all follow the same basic recipe, which
for an integrand $f$ and remainder function $k_n$ can be summarized as follows:
\begin{enumerate}[(i)]
\item Find the poles $\{z_i\}$ of $f$.
\item Take the error to be the residues of $k_nf$ at $\{z_i\}$.
\item If necessary, simplify the result by keeping only the term that dominates as
  $n\to\infty$. In the examples we have considered, this has been the one with the highest
  derivative of $k_n$.
\end{enumerate}
A few comments are also in order before we move on to applying our results to more general problems.

\paragraph{Non-integer poles} Our derivations of error estimates for $f_p$ and $g_p$ are
only valid for integer $p$. In practice we want to estimate the error for kernels like
$|\v x-\v x_0|^{-q}$, $q\in\mathbb N$, which means that $p$ also takes half-integer
values. Instead of carrying out new derivations for $p$ half-integer, we simply note that
the estimates we already have work well in practice also for half-integers, as long as the
factorial is computed using the gamma function,
\begin{align}
  (p-1)! = \Gamma(p),
  \label{eq:factorial_to_gamma}
\end{align}
which is true for integer $p$.

\paragraph{Taking the density into account}
When computing the QBX coefficients \eqref{eq:def_coeff_2d} and \eqref{eq:def_coeff_3d},
we typically have kernels like $f_p$ or $g_p$ times a non-constant density $\sigma$. For
the 2D coefficients $a_j$ in \eqref{eq:def_coeff_2d} we for example have the integrand
$\sigma(z)f_j(z)$. With a pole of order $j$ in $f_j$ at $z_0$ the residue contains all the
derivatives of $\sigma$ up to $\sigma^{(j-1)}$ evaluated at $z_0$, so a minimum
requirement is that those derivatives are bounded. Let us now assume $\sigma$ to be smooth
everywhere, which in the BIE setting is quite reasonable. The only residue to consider is
then the one at $z_0$, which is dominated by the highest derivative of $k_n$,
\begin{align}
  \resbrac{\sigma(z)f_j(z)k_n(z), z_0} \simeq \sigma(z_0) \resbrac{f_j(z)k_n(z), z_0} .
  \label{eq:2}
\end{align}
Depending on $\sigma$, the error may also have a contribution from the contour
$\mathcal C$, as the contour integral may be non-vanishing at infinity. This contribution
can however be safely neglected in the asymptotic region, as the contribution from the
poles then dominates the error, at least for poles close to the domain of integration.

As a concrete example, let us now consider the density $\sigma(x) = x^k e^{imx}$
multiplied with $g_p$, integrated on $[-1,1]$ using Gauss-Legendre quadrature. (A density
like this with $k$ or $m$ nonzero would appear when using a discretization that
corresponds to a polynomial or a Fourier series that is integrated term by term.) The
integrand
\begin{align}
  h(x) = \sigma(x)g_p(x) = \frac{x^k e^{imx}}{((x-a)^2+b^2)^p},
\end{align}
is analytic everywhere expect at the poles $z_0=a+ib$ and $\overline z_0$, so we can
compute the error as the contour integral of $k_nh$ over $\mathcal C$ plus the
residues. As $\mathcal C$ tends to infinity the integral does not vanish for any $n$,
since then $|k_n(z)h(z)| \simeq |z|^{k-2p-2n-1} e^{m|z|}$. Taking $\mathcal C$ to be a
circle of radius $R \gg 1$,
\begin{align}
  |\int_\mathcal{C} k_n(z)h(z)\d z| = \ordo{R^{k-2p-2n-1}e^{mR}} \le
  \ordo{R^{k-2n}e^{mR}}.
\end{align}
Minimizing the rightmost bound with respect to $R$ gives $R_{\text{min}}=\frac{2n-k}{m}$,
such that
\begin{align}
  |\int_\mathcal{C} k_n(z)h(z)\d z| \le \ordo{ \left(\frac{2n-k}{me}\right)^{k-2n} }
  \simeq \ordo{n^{-2n}},
\end{align}
We have $\sigma$ smooth everywhere, so we use the simplification \eqref{eq:2} of keeping
only the highest order derivative in $k_n$, such that
\begin{align}
  \oprem[h] &\simeq \sigma(z_0)\Res(g_pk_n,z_0) + \sigma(\overline z_0)\Res(g_pk_n,\overline
  z_0) + \mathcal{O}(n^{-2n})  .
\end{align}
The last term has faster than exponential decay, so the contribution from the poles will
dominate the error. Inserting \eqref{eq:Res(fp)},
\begin{align}
  |\oprem[h]| &\simeq \left(|\sigma(z_0)| + |\sigma(\overline z_0)|\right) \frac{|k_n^{(p-1)}(z_0)|}{(p-1)! (2b)^p} \\ 
  &= |z_0|^k (e^{mb} + e^{-mb})  \frac{|k_n^{(p-1)}(z_0)|}{(p-1)! (2b)^p}.
\end{align}
We can thus get an error estimate for $h$ by taking our previous results for the kernel
$f_p$, multiplied by the density $\sigma$ evaluated at the poles of $f_p$. 

Our experience is that the above reasoning holds true also in the general case, such that
we can estimate the quadrature error for an arbitrary, smooth density using the error
estimate for the kernel times the density evaluated at the poles. In practice we can
simplify this even further by using the values of the density on $\Gamma$, since that is
what we have access to in a numerical implementation. If $x_c$ is the point on $\Gamma$
closest to $z_0$, we then use that $\sigma(z_0) \approx \sigma(x_c)$ if $r$ small and
$\sigma$ smooth. For a nearly singular kernel $f$ this allows us to use existing error
estimates by writing
\begin{align}
  \oprem[\sigma f] \approx \sigma(x_c) \oprem[f] .
  \label{eq:est_with_sigma}
\end{align}

\section{Applications}
\label{sec:applications}

In the previous section we showed how to develop quadrature error estimates for the
kernels $f_p$ and $g_p$, when integrated on model geometries using the trapezoidal and
Gauss-Legendre quadrature rules. We will in this section show how these error estimates
can be used to estimate the quadrature errors of QBX. Before we do that, however, we will
show an example of how we can use our results to estimate the error when evaluating a
nearly singular layer potential using regular quadrature.

\subsection{Double layer potential in two dimensions}

To see how our results can be used when working with the double layer
potential, we now consider an example from Helsing \& Ojala
\cite[sec. 10.1]{Helsing2008}. We then consider the two-dimensional
double layer potential in complex form
\begin{align}
  u(z) = \int_\Gamma \sigma(w) \imbrac{\frac{\d w}{w-z}},
\end{align}
where $\sigma$ is the solution to an interior Dirichlet problem with a known reference
solution\footnote{We refer to \cite{Helsing2008} for details on $u_\text{ref}$ and how to
  compute $\sigma$.} $u_\text{ref}(z)$. The boundary is starfish-shaped with
parametrization
\begin{align}
  z(t) = (1 + 0.3 \cos 5t) e^{it}, \quad -\pi \le t \le \pi,
\end{align}
and is divided into 35 panels $\Gamma_i$ of equal length in $t$. Each panel is discretized
using a 16-point Gauss-Legendre quadrature rule, for a total of 560 discretization
points. Computing $u(z)$ in the interior using the Gauss-Legendre quadrature results in
large errors close to the boundary. This can be seen in Figure
\ref{fig:dbl_layer_starfish_errors}, which shows the relative pointwise error
\begin{align}
  e(z) = \frac{|u(z) - u_\text{ref}(z)|}{\|u_\text{ref}(z)\|_\infty} .
  \label{eq:dbl_layer_error}
\end{align}

An accurate estimate of $e(z)$ for $\Gamma$ discretized using the
trapezoidal rule can be observed in \cite[Fig. 1]{Barnett2014}. To
estimate $e(z)$ when using Gauss-Legendre panels, we can use our
results from section \ref{sec:gl-2d-kernel} to estimate the error
$e_i$ from each panel $\Gamma_i$, and then sum them together,
\begin{align}
  e(z) = \sum_{i=1}^{N_\text{panels}} e_i(z).
\end{align}
(In practice it suffices to use the contribution from the two closest panels, as the
closest panel completely dominates the error except for when $z$ is close to the edge
between two panels.) Replacing each panel with a corresponding flat panel, we can
generalize the results of Theorem \ref{thm:gl-complex} to estimate the error from panel
$\Gamma_i$ as
\begin{align}
  e_i(z) \lesssim \frac{2\pi \|\sigma\|_{L^\infty(\Gamma_i)}}{|z_0 + \sqrt{z_0^2-1}|^{2n+1}},
  \label{eq:dbl_layer_est}
\end{align}
where $z_0$ is the location of the pole $z$ under the transformation that takes $\Gamma_i$
to $[-1,1]$. We approximate the imaginary part of $z_0$ as $\Im z_0 = d/L$, where $L$ is
the length of $\Gamma_i$ and $d$ is the shortest distance from $z$ to $\Gamma_i$. The real
part $\Re z_0$ is approximated as the real part of $z$ after applying the scaling and
rotation that takes the endpoints of $\Gamma_i$ to $-1$ and $1$.

\begin{figure}[htbp]
  \centering
  \begin{subfigure}[t]{0.4\textwidth}
    \includegraphics[width=\textwidth]{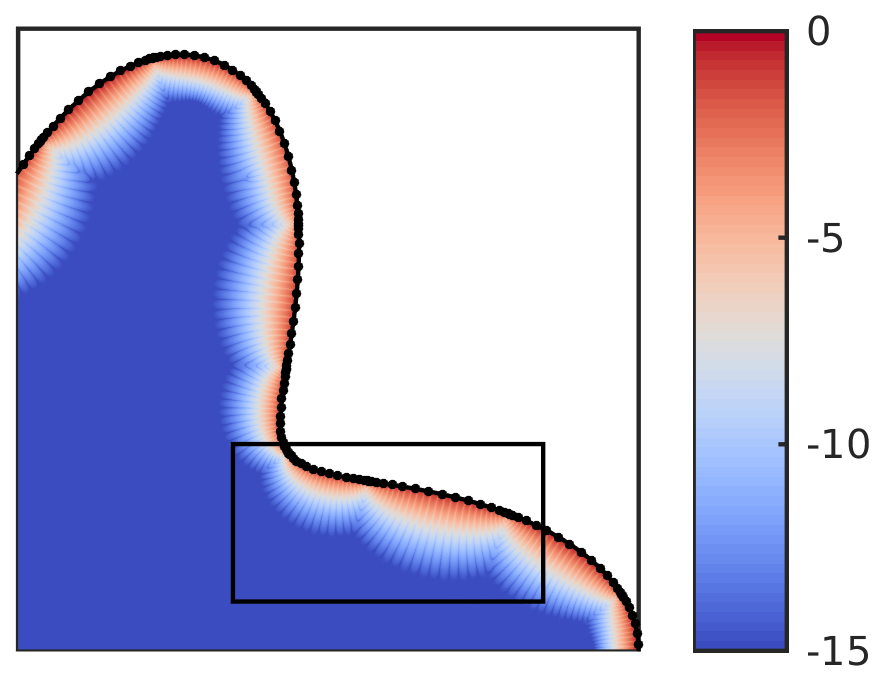}
    \caption{10-logarithm of the error $e(z)$ in a section of the starfish.}
    \label{fig:dbl_layer_starfish_errors}
  \end{subfigure}
  \hfill
  \begin{subfigure}[t]{0.59\textwidth}
    \includegraphics[width=\textwidth]{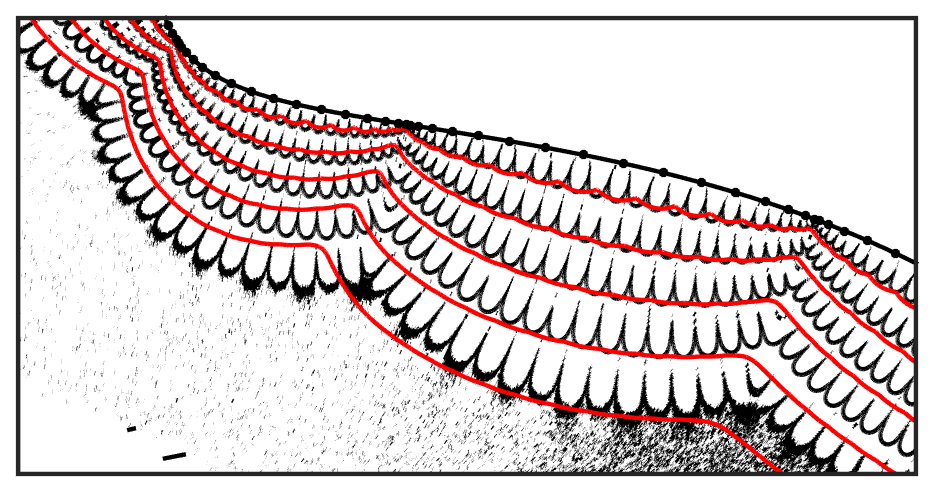}
    \caption{Level contours of $\log_{10}e(z) = \{-15$, $-12$, $-9$,
      $-6$, $-3\}$ in the cutout indicated in
      (\subref{fig:dbl_layer_starfish_errors}). Actual error
      \eqref{eq:dbl_layer_error} in \textbf{\color{black}black},
      estimate \eqref{eq:dbl_layer_est} in
      \textbf{\color{red}red}. The noise outside the contours are
      roundoff errors at $10^{-15}$.}
    \label{fig:dbl_layer_starfish_curves}
  \end{subfigure}
  \caption{Error when evaluating the double layer potential on a starfish domain using
    35 Gauss-Legendre panels with 16 points each.}
  \label{fig:dbl_layer_starfish}
\end{figure}

Evaluating the estimate \eqref{eq:dbl_layer_est} in the interior produces an error plot
which in ''eyeball norm'' is identical to that in Figure
\ref{fig:dbl_layer_starfish_errors}. If we are more careful and compare the level sets of
the error and the estimate (Figure \ref{fig:dbl_layer_starfish_curves}), we see that the
correspondence is extremely good for panels that are close to flat, while the estimate
suffers some inaccuracy for curved panels. Increasing the number of panels improves the
accuracy of the estimate (Figure \ref{fig:dbl_layer_starfish_n70}), as the individual
panels then are less curved. By removing the absolute value in the denominator of
\eqref{eq:dbl_layer_est} and instead taking the imaginary part of the whole expression we
can also reproduce the small-scale oscillations of the true error, though that has small
practical relevance.

\begin{figure}[htbp]
  \centering
  \begin{subfigure}{0.4\textwidth}
    \includegraphics[width=\textwidth]{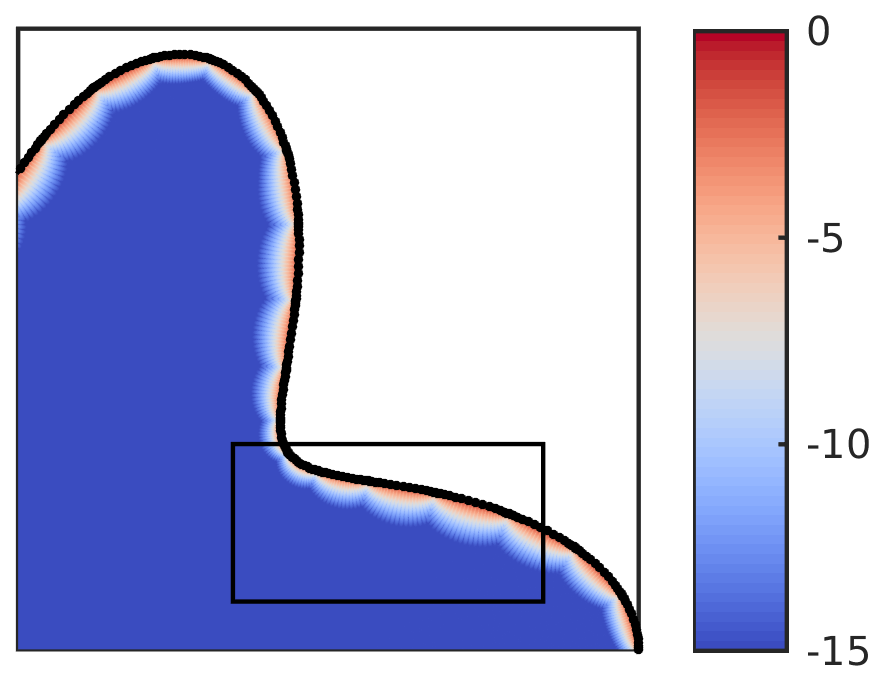}
    \caption{}
    \label{fig:dbl_layer_starfish_errors_n70}
  \end{subfigure}
  \hfill
  \begin{subfigure}{0.59\textwidth}
    \includegraphics[width=\textwidth]{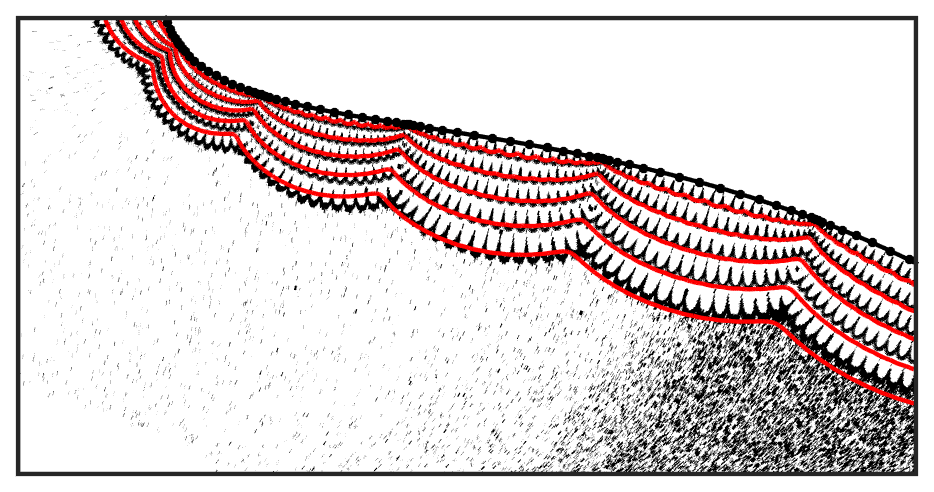}
    \caption{}
    \label{fig:dbl_layer_starfish_curves_n70}
  \end{subfigure}
  \caption{Same plots as in Figure \ref{fig:dbl_layer_starfish}, but with 70
    Gauss-Legendre panels.}
  \label{fig:dbl_layer_starfish_n70}
\end{figure}

These results suggest that our error estimates are useful for estimating the magnitude of
quadrature errors due to the near singularity of the integral. This allows for a cheap way
of determining when one needs to use a special quadrature method, such as QBX or the one
outlined in \cite{Helsing2008}.

\subsection{QBX in two dimensions}
Let us now return to the QBX quadrature error for the single layer potential in two
dimensions, which we introduced in section \ref{sec:quadr-expans-qbx}. We let $\Gamma$ be
a curve divided into $N$ panels of length $h$, and compute the coefficients $\tilde a_j$
at an expansion center $z_c$ from \eqref{eq:def_coeff_2d} using $n$-point Gauss-Legendre
quadrature on each panel. We then have from \eqref{eq:qbx_errors_2d} that the quadrature
error is
\begin{align}
  e_Q(z) = \Re \sum_{j=0}^p (a_j - \tilde a_j) (z-z_c)^j, \quad |z-z_c| \le r,
  \label{eq:2d_qbx_quad_err}
\end{align}
which is bounded by
\begin{align}
  |e_Q| \le \sum_{j=0}^p |a_j - \tilde a_j| r^j.
  \label{eq:2d_qbx_quad_err_simple}  
\end{align}
This error is discussed in Epstein et al. \cite{Epstein2013}, where they use the standard
Gauss-Legendre error estimate \eqref{eq:gl_classic} to get
\begin{align}
  |e_Q| \le C_n(p,\Gamma)\left(\frac{h}{4r}\right)^{2n} \| \varphi \|_{\mathcal C^{2n}} ,
\end{align}
from which it is concluded that $r > h/4$ is required for the quadrature to converge. This
does however not give any information about the rate at which the quadrature error grows
with $p$. Nor does it give any practically useful information about the quadrature error,
since it is easy to numerically verify that $r < h/4$ works just fine as long as $n$ is
large enough. In fact, the standard Gauss-Legendre error estimate is overly pessimistic
for this type of integrand, as discussed in section \ref{sec:class-error-estim}.

Using the result in \eqref{eq:gl_est_2d_simple}, as follows from
Theorem \ref{thm:gl-complex}, and generalizing as discussed in section
\ref{sec:comments}, we can estimate the quadrature error on the
interval $[-1,1]$ for a function of type $h(z) =
\sigma(z)(z-z_0)^{-j}$ as
\begin{align}
  |\oprem[h]| & \lesssim  |\sigma(z_0)| \frac{2\pi}{(j-1)!}(2n)^{j-1}e^{-2n \Im z_0},
\end{align}
under the assumption $\Im z_0 \ll 1$ and $\sigma$ smooth. Using a change of variables from
an interval of length $h$ to $[-1,1]$ and setting $z_0=2ir/h$, we can estimate the
quadrature error for a single coefficient as
\begin{align}
  |\tilde a_j - a_j| \lesssim 
  C(\Gamma,h)
  \frac{1}{j!} 
  \left(\frac{4n}{h}\right)^{j-1}
  e^{-4nr/h} \| \sigma \|_{L^{\infty}(\Gamma_r)},
  \label{eq:2d_qbx_simple_est}
\end{align}
where $\Gamma_r$ is the strip of width $r$ stretching from $\Gamma$ into the domain. In
practice we can use $\| \sigma \|_{L^{\infty}(\Gamma_r)} \approx \| \sigma
\|_{L^{\infty}(\Gamma)}$ if $r$ small and $\sigma$ smooth. For $\Gamma$ a perfectly flat
panel the constant would be $C(\Gamma,h)=2\pi$, and in practice $C$ is close to $2\pi$ if
the panels on a general $\Gamma$ are close to flat, since the error is dominated by that
from the panel closest to $z_c$. Putting \eqref{eq:2d_qbx_quad_err_simple} and
\eqref{eq:2d_qbx_simple_est} together allows us to estimate the quadrature error as
\begin{align}
  |e_Q| \lesssim C(\Gamma,h) \frac{h}{4n} 
  \left[ \sum_{j=0}^p \frac{1}{j!} \left(\frac{4nr}{h} \right)^j \right]
  e^{-4nr/h} \| \sigma \|_{L^{\infty}(\Gamma_r)}.
  \label{eq:2d_qbx_full_quad_err}
\end{align}
Discarding the term corresponding to $j=0$ (which is small) and using
Stirling's formula \eqref{eq:stirling}, we can simplify this to
\begin{align}
  |e_Q| \lesssim C'(\Gamma,h) \frac{h}{n} %
  \sum_{j=1}^p \frac{1}{\sqrt j} \left(\frac{4nre}{jh} \right)^j %
  e^{-4nr/h} \| \sigma \|_{L^{\infty}(\Gamma_r)} .
  \label{eq:2d_qbx_simpler_quad_err}
\end{align}
This expression gives a reasonably clear of view of how the error
depends on the involved variables, and Figure
\ref{fig:2dqbx_quad_error} shows that it captures the behavior of the
quadrature error quite well. The quotient $4r/h$ appears here too, but
without any type of bound; the convergence in $n$ will just stall as
$r/h \to 0$. Interestingly, the estimate
\eqref{eq:2d_qbx_simpler_quad_err} is similar to the bound on the
quadrature error derived in \cite[Thm 3.2]{Barnett2014} for the double
layer potential using the trapezoidal rule.

The sum over $j$ in \eqref{eq:2d_qbx_full_quad_err} and
\eqref{eq:2d_qbx_simpler_quad_err} makes the expressions a bit
cumbersome to interpret, though we can deduce that the error will in
some parameter regions grow exponentially in $p$. Recognizing that the
sum in \eqref{eq:2d_qbx_full_quad_err} is the truncated exponential
sum, we can write
\begin{align}
  \sum_{j=0}^p \frac{1}{j!} \left(\frac{4nr}{h} \right)^j \le e^{4nr/h},
\end{align}
such that, surprisingly, the quadrature error has an approximate upper bound independent
of $p$,
\begin{align}
  |e_Q| \lesssim C(\Gamma,h) \frac{h}{4n} \| \sigma \|_{L^\infty(\Gamma_r)} .
  \label{eq:2d_qbx_quad_err_ub}
\end{align}
Alternatively, we can use the definition of the incomplete gamma function $\Gamma(n,x)$
for integer $n$,
\begin{align}
  \Gamma(n+1,x) = n!e^{-x}\sum_{j=0}^{n}\frac{x^j}{j!}
  = \int_x^\infty t^{n}e^{-t} \d t,
\end{align}
to put the estimate of \eqref{eq:2d_qbx_full_quad_err} in a form that may not be easier to
interpret, but at least is more compact,
\begin{align}
  |e_Q| \lesssim C(\Gamma,h)
  \frac{h}{4np!}
  \Gamma(p+1, 4nr/h)
  \| \sigma \|_{L^{\infty}(\Gamma_r)} .
\end{align}

\begin{figure}
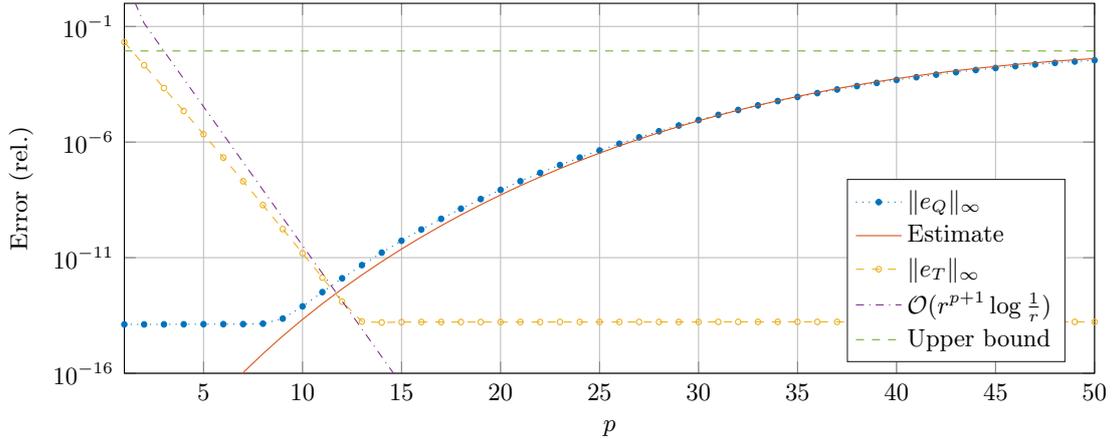

  \centering
  \tikzset{mark size=1}
  \includetikz{\textwidth}{0.4\textwidth}{2dqbx_separate_errors_gl}
  \caption{ Error components \eqref{eq:qbx_errors_2d} of 2D QBX on the
    unit circle, using density $\sigma = (\sin\theta)^{10}$,  measured at
    all nodes by comparing to a highly resolved reference
    solution. Numerical setup is $N=20$, $n=100$ and $r/h=0.1$. The
    quadrature error $e_Q$ and its upper bound are estimated using
    \protect\eqref{eq:2d_qbx_full_quad_err} and
    \protect\eqref{eq:2d_qbx_quad_err_ub} with $C=2\pi$, while the
    decay of the truncation error $e_T$ roughly follows
    \eqref{eq:trunc_err_bound_2d}.}
  \label{fig:2dqbx_quad_error}
\end{figure}

\subsection{QBX in three dimensions}

For the single layer potential in three dimensions, we need to
estimate the quadrature error in \eqref{eq:def_coeff_3d}. The surface
quadrature on $\Gamma$ is assumed to be a tensor product quadrature
rule, such that the surface is sliced into one-dimensional cross
sections. We will here show how to develop estimates for a simple
square Gauss-Legendre patch, but results for the more complicated case
of a spheroid can be found in appendix \ref{sec:qbx-spheroid}. Before doing
that, however, we need to work out two preliminaries: (i) How to
relate the remainder of a composite surface quadrature to the
remainders of the corresponding one-dimensional quadrature rules. (ii)
How to account for the magnitude and nearly singular behavior of the
spherical harmonics component of the kernel in
\eqref{eq:def_coeff_3d}.

\paragraph{Surface quadrature remainder}
Let $\opint^2$ denote a composite surface integral that is independent of the order of
integration, and let the subscript $\opint_s$ denote an integral carried out in the
variable $s$,
\begin{align}
  \opint^2[f(s,t)] := \opint_{t}\opint_{s}[f(s,t)] = \iint_\Gamma f(s,t) \d s\d t .
\end{align}
Applying a tensor product quadrature rule over the surface, we get
\begin{align}
  \opint^2f = \underbrace{ \opquadvar{t} \opquadvar{s} f }_{=: \opquad^2 f} +
  \opremvar{t}\opquadvar{s}f + \opquadvar{t}\opremvar{s}f + \opremvar{t}\opremvar{s}f.
\end{align}
Assuming the quadratic remainder term to be negligible and that $\opquadvar{\alpha}
\approx \opint_\alpha$, we can approximate the remainder of the surface quadrature as
\begin{align}
  \oprem^2f := (\opint^2 - \opquad^2)f \approx (\opint_s\opremvar{t} +
  \opint_t\opremvar{s})f.
  \label{eq:surface_quad_remainder}
\end{align}
So the surface remainder is approximately equal to the integrals of the one-dimensional
remainders, which is what one would expect. 

\newcommand{\spharmkernel}{K}
\newcommand{\legkernel}{K}
\paragraph{Spherical harmonics kernel}

For 3D QBX, our goal is to compute the quadrature error $e_Q$
\eqref{eq:qbx_errors_3d_quad}, 
\begin{align}
  e_Q &= \sum_{l=0}^p |\v x-\v x_0|^l \sum_{m=-l}^l (\alpha_l^m - \tilde
  \alpha_l^m) Y_l^{-m}(\theta_x,\varphi_x),
\end{align}
where
\begin{align}
  \alpha_l^m - \tilde\alpha_l^m =
  \frac{4\pi}{2l+1}
  \opremvar{\v y}^2\left[
  |\v y - \v x_0|^{-l-1}
  Y_l^{m}(\theta_y,\varphi_y)
  \sigma(\v y) \right]  .
\end{align}
We can simplify this by using the Legendre polynomial addition theorem
\cite{Greengard1997}, 
\begin{align}
  P_l(\cos\theta) = \frac{4\pi}{2l+1} \sum_{m=-l}^l
  Y_l^{-m}(\theta_x,\varphi_x) Y_l^{m}(\theta_y,\varphi_y),
\end{align}
where $\theta$ is the angle between $\v x$ and $\v y$. Using this, we
can simplify the quadrature error to
\begin{align}
  e_Q &= \sum_{l=0}^p |\v x-\v x_0|^l \opremvar{\v y}^2\left[ |\v y -
    \v x_0|^{-l-1} P_l(\cos\theta) \sigma(\v y) \right] .
  \label{eq:quad_err_simple}
\end{align}
Our task is then to estimate the quadrature error of the kernel
\begin{align}
  \legkernel_l(\v x, \v y) = \frac{ P_l(\cos\theta)}{|\v y - \v x_0|^{l+1}},
  \label{eq:legkernel}
\end{align}
which we shall refer to as the Legendre kernel. We are mainly
interested in the quadrature error when QBX is used for singular
integration, so we will assume $\v x \in \Gamma$, such that
\begin{align}
  \v x_0 -\v x = r\v{\hat n} \quad \text{ and } \quad \min_{\v
    y\in\Gamma} |\v y-\v x_0| = r.
\end{align}

In order to estimate $\oprem^2[\legkernel_l]$, let us now consider the
integral along a curve that is the intersection of $\Gamma$ and a
plane containing the expansion center $\v x_0=(x_0,y_0,z_0)$. We name
the curve $\gamma$ and assume without loss of generality that it lies
in the $xz$-plane, such that $\v y=(x,0,z)$ and $\v
x_0=(x_0,0,z_0)$. Then,
\begin{align}
  |\v y - \v x_0| = \sqrt{(x-x_0)^2 + (z-z_0)^2} \ge r .
\end{align}
Also assuming $\v x_0 - \v x = r\v{\hat z}$, we have that
\begin{align}
  \cos\theta &= \frac{z-z_0}{|\v y - \v x_0|}.
  \label{eq:costheta}
\end{align}

The Legendre polynomials $P_l(\cos\theta)$ are degree $l$ polynomials
in $\cos\theta$, and the quadrature error will be determined by the
leading coefficients of those polynomials, for reasons that will
become clear shortly. From Rodrigues' formula
\begin{align}
  P_l(x) = \frac{1}{2^ll!} \frac{\d^{l}}{\d x^{l}}(x^2-1)^l,
  \quad x \in [-1,1],
\end{align}
we can derive that 
\begin{align}
  P_l(\cos\theta) = \frac{(2l)!}{2^l(l!)^2} \cos^l\theta + 
  \mathcal O(\cos^{l-1}\theta).
\end{align}
Inserting \eqref{eq:costheta}, we get
\begin{align}
  P_l(\cos\theta) = \frac{(2l)!}{2^l(l!)^2} \frac{(z-z_0)^l}{|\v y - \v x_0|^l} +
  \ordo{\frac{(z-z_0)^{l-1}}{|\v y - \v x_0|^{l-1}}}.
\end{align}
Now we see the reason why the leading term in the polynomial will
dominate the quadrature error; $\cos\theta$ has a pole at $\v x_0$,
and the highest order of that pole will dominate the quadrature
error. The quadrature error for $\legkernel_l$ can therefore be found
by studying the quadrature error of the function
\newcommand{\spharmconst}{{B_l}}
\begin{align}
  \psi_l(x,z) &= \spharmconst
  \frac{(z-z_0)^l}{\left( (x-x_0)^2+(z-z_0)^2 \right)^{l+\frac 12}}, \\
  \spharmconst &= \frac{(2l)!}{2^l(l!)^2},
  \label{eq:spharmconst}
\end{align}
since on $\gamma$
\begin{align}
  \oprem[\legkernel_l(\v x, \cdot)] \approx \oprem[\psi_l].
  \label{eq:spharm_lt_psi}
\end{align}
The magnitude of the spherical harmonics $\spharmconst$ can be simplified using Stirling's
formula $n! \approx \sqrt{2\pi}n^{n+\frac 12}e^{-n}$,
\begin{align}
  \spharmconst \approx 
  \begin{cases}
    1, &\text{for } l=0, \\
    2^l / \sqrt{\pi l}, & \text{for } l \ge 1
  \end{cases}
  \label{eq:spharmconst_simple}
\end{align}
An error estimate for $\psi_l$ follows from our results for the
Cartesian kernel $g_p$ \eqref{eq:gp_def} in Theorems
\ref{thm:gl-cartesian} and \ref{thm:trapz-cartesian}, since
\begin{align}
  \psi_l(x,z) = \spharmconst (z-z_0)^l g_{l+\frac 12}(x,z) .
  \label{eq:psi}
\end{align}
In fact, a good estimate of the error is obtained by analyzing the
simpler form
\begin{align}
  \psi_l(x,z) = \spharmconst r^l g_{l+\frac 12}(x,z),
  \label{eq:psi_simple}
\end{align}
where $r=\min_{\v x\in\Gamma} |\v x-\v x_0|$.

\subsubsection{Gauss-Legendre patch}
\label{sec:gauss-legendre-patch}

\begin{figure}
  \centering
  \includegraphics{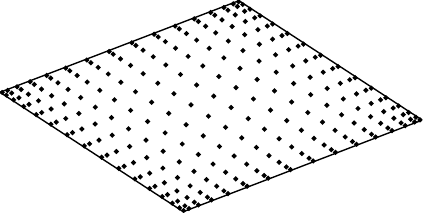}
  \caption{An $n \times n$ Gauss-Legendre patch.}
  \label{fig:gl_patch}
\end{figure}

When forming a quadrature rule for a general surface $\Gamma$, a straightforward method
that is often used in BIE methods is to divide the surface into approximately square
patches, and then use an $n\times n$ tensor product Gauss-Legendre rule on each patch
(which then looks like Figure \ref{fig:gl_patch}). Denoting the patches $\Gamma_i$, the
QBX expansion coefficients at a center $\v x_0$ are then computed as
\begin{align}
  \alpha_l^m = \frac{4\pi}{2l+1}
  \sum_i \int_{\Gamma_i}
  |\v y - \v x_0|^{-l-1}
  Y_l^{m}(\theta_y,\varphi_y)
  \sigma(\v y) \d S_{\v y},
  \label{eq:qbx_coeff_patches}
\end{align}
with the approximate coefficients $\tilde \alpha_l^m$ computed using
the associated quadrature rule for each patch. To be able to estimate
the QBX quadrature error $e_Q$ \eqref{eq:qbx_errors_3d_quad} we need
to be able to estimate the quadrature error from each patch when
evaluating \eqref{eq:qbx_coeff_patches} using Gauss-Legendre patches.

We now focus on the error from a single Gauss-Legendre patch, in the
special case of it being square and flat. For that we let $\Gamma$ be
the patch $(x,y,z) \in [-1,1]\times[-1,1]\times\{0\}$, and let $\v
x_0=(x_0,y_0,r)$ be a point close to $\Gamma$. We consider the
simplified form of the quadrature error \eqref{eq:quad_err_simple},
\begin{align}
  e_Q = \sum_{l=0}^p |\v x-\v x_0|^l \oprem^2\left[
    \legkernel_l(\v x, \cdot) \sigma(\cdot) \right],
 \label{eq:quad_err_simple_repeated}
\end{align}
where $\legkernel_l$ is the Legendre kernel \eqref{eq:legkernel}. To
estimate the quadrature error of $\legkernel_l$ on the patch, we
consider the quadrature error of the equivalent kernel $\psi_l^2$,
\begin{align}
  \psi_l^2(x,y) = \spharmconst r^l g^2_{l+\frac 12}(x,y),
  \label{eq:psi2}
\end{align}
which is the 2D analogue of \eqref{eq:psi_simple} and satisfies
$\oprem^2\legkernel_l\approx\oprem^2\psi_l^2$. Here $g_p^2$ is the
Cartesian kernel over the patch, defined as
\begin{align}
  g_p^2(x,y) = \frac{1}{((x-x_0)^2 + (y-y_0)^2 + r^2)^p} .
  \label{eq:def_gp2}
\end{align}
Evaluating the integral of $g_p^2$ on $\Gamma$ using the tensor product Gauss-Legendre
rule, we can expect (and verify) that the error for a given $r$ is largest for $\v x_0$
above the center of the patch, $x_0=y_0=0$. By symmetry we then have that
$\opint_x\opremvar{y}=\opint_y\opremvar{x}$, so we can use the results in
\eqref{eq:gl_est_a0} and \eqref{eq:factorial_to_gamma} with $b^2=y^2+r^2$ to estimate
$\opremvar{x}[g_p^2]$, which gives us
\begin{align}
  \oprem^2[g_p^2] \approx 2\opint_y\opremvar{x}[g_p^2] \lesssim 2 \int_{-1}^1 \frac{2\pi
    n^{p-1} }{\Gamma(p)(y^2+r^2)^{\frac p2}} e^{-2n\sqrt{y^2+r^2}} \d y .
  \label{eq:gl_patch_formulation}
\end{align}
We compute an approximation of this integral by expanding the square root around $y=0$,
\begin{align}
  \begin{split}
    \int_{-1}^1 (y^2+r^2)^{-p/2} e^{-2n\sqrt{y^2+r^2}} \d y &\approx r^{-p} \int_{-1}^1  e^{-2n(r + y^2/(2r))} \d y \\
    &= r^{-p}e^{-2nr}\sqrt{\pi r/n} \operatorname{erf}(\sqrt{n/r}) .
  \end{split}
  \label{eq:gl_patch_2d_integral}
\end{align}
We are considering large $n$ and small $r$, so $\operatorname{erf}(\sqrt{n/r})\approx
1$, such that
\begin{align}
  \int_{-1}^1 (y^2+r^2)^{-p/2} e^{-2n\sqrt{y^2+r^2}} \d y &\approx
  r^{-p} e^{-2nr} \sqrt{\pi r/n} .
  \label{eq:gl_patch_2d_integral_result}
\end{align}
This means that the result of the integration in $y$ is approximately
equal to multiplying the value of the integrand at $y=0$ with
$\delta=\sqrt{\pi r/n}$, independent of the integration bounds (as
long as the interval is wider than $\delta$). An interpretation of
this is that most of the error comes from a strip of width $\delta$
centered around $y=0$. Finally combining
\eqref{eq:gl_patch_2d_integral_result},
\eqref{eq:gl_patch_formulation} and \eqref{eq:psi2}, we get the
estimate for $\psi_l^2$ on a patch,
\begin{align}
  \oprem^2[\psi_l^2] \lesssim \frac{4\pi^{\frac 32}}{\Gamma(l+1/2)}
  n^{l-1} e^{-2nr} .
  \label{eq:est_psi2_patch}
\end{align}

We now consider a slightly more general case, where the patch is of
size $h \times h$. The corresponding change of variables from a unit
square patch in the integral of $\psi_l^2$ allows us to use the
results in \eqref{eq:est_psi2_patch} with the additional factor
$(2/h)^{l-1}$ and the change $r \to 2r/h$. It follows that the
quadrature error for the Legendre kernel $\legkernel_l$ on a flat
Gauss-Legendre patch with sides $h$ can be estimated as
\begin{align}
  \oprem^2[\legkernel_l] \lesssim 
  \frac{4\pi^{\frac 32} \spharmconst}{\Gamma(l+\frac 12)}
  \left( \frac{2n}{h} \right)^{l-1} e^{-4nr/h},
  \label{eq:est_qbxkernel_patch}
\end{align}
where $r$ is the distance from the expansion center to the patch.

We now let $\v x_t=(x_t,y_t,0)$, $-1 \le x_t,y_t \le 1$, be a target point on the patch
$\Gamma$ and $\v x_0=(x,y,r)$ be the corresponding expansion center. Using
\eqref{eq:est_with_sigma} gives us
\begin{align}
  \oprem^2[\legkernel_l\sigma] \approx 
  \sigma(\v x_t) \oprem^2[\legkernel_l] .
\end{align}
Inserting this, \eqref{eq:spharmconst} and
\eqref{eq:est_qbxkernel_patch} into
\eqref{eq:quad_err_simple_repeated} gives us the final expression for
the QBX quadrature error,
\begin{align}
  |e_Q(\v x_t)| &\lesssim |\sigma(\v x_t)| \frac{h}{n} \sum_{l=0}^p
  \frac{2\pi^{\frac 32}(2l)!}{\Gamma(l+\frac 12)(l!)^2} \left(
    \frac{nr}{h} \right)^{l} e^{-4nr/h} .
  \label{eq:est_quad_err_patch}
\end{align}
The accuracy of this estimate is demonstrated in Figure
\ref{fig:3dqbx_patch_full}, which shows the QBX errors for $\v x_t$ at
the center of the patch, $\v x_t=(0,0,0)$. The grid has $96 \times 96$
points, $r=0.2$ and $\sigma$ is a two-dimensional polynomial of degree
15 with random coefficients. This would correspond to using a 16th
order patch with factor 6 oversampling.
\begin{figure}
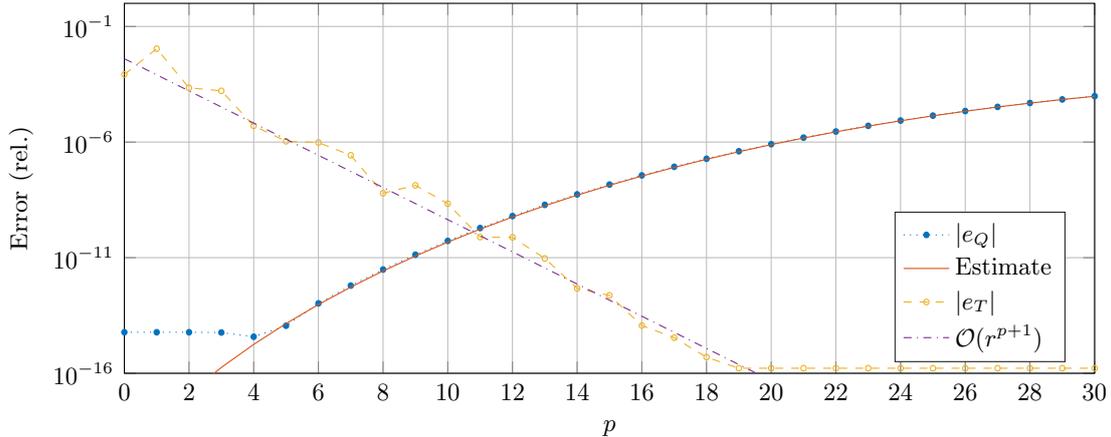

  \centering
  \tikzset{mark size=1}
  \includetikz{\textwidth}{0.4\textwidth}{patch_full_qbx}
  \caption{The error components \eqref{eq:qbx_errors_3d_trunc} and
    \eqref{eq:qbx_errors_3d_quad} of 3D QBX when applied to a
    Gauss-Legendre patch and compared against a reference
    solution. The quadrature error $e_Q$ is well approximated by the
    estimate \eqref{eq:est_quad_err_patch}, while the decay rate of
    the truncation error $e_T$ follows \eqref{eq:trunc_err_bound_3d}.}
  \label{fig:3dqbx_patch_full}
\end{figure}

To put our result in \eqref{eq:est_quad_err_patch} in a more general
form, we simplify it one step further. From Stirling's formula
\eqref{eq:stirling} we have that $\Gamma(l+1/2) \approx C l^l
e^{-l}$. Combining this with \eqref{eq:spharmconst_simple} and
discarding the $l=0$ term allows us to estimate the 3D QBX quadrature
error on a Gauss-Legendre patch as
\begin{align}
  |e_Q| \lesssim C \frac{h}{n} %
  \sum_{l=1}^p \frac{1}{\sqrt{l}} %
  \left( \frac{4nre}{hl} \right)^{l} %
  e^{-4nr/h} \|\sigma\|_{L^\infty(\Gamma)} .
  \label{eq:est_qbx_patch_simpler}
\end{align}
This is completely analogous to the 2D QBX result for Gauss-Legendre
panels \eqref{eq:2d_qbx_simpler_quad_err}.

\subsubsection{Complex geometries}

So far we have developed error estimates for simple geometries, but
the framework can be extended to more complex geometries in a
straightforward fashion, though the computations may be cumbersome. As
an example of how one can proceed, in appendix \ref{sec:qbx-spheroid} we
develop QBX quadrature error estimates for $\Gamma$ being a spheroid,
discretized using both the trapezoidal and Gauss-Legendre quadrature
rules. Another possibility would be to extend the previous section's
estimates for the Gauss-Legendre patch, to account for the patch
having curvature. This might be useful when estimating quadrature
errors on a general surface which has been divided into patches.

\subsection{Helmholtz kernel}
\renewcommand{\ell}{{l}}

As a final example, we shall briefly consider the use of QBX when
solving the Helmholtz equation $(\nabla+\omega^2)u=0$ in two
dimensions. This is the application which has been considered in the
majority of the QBX implementations to date
\cite{Barnett2014,Klockner2013,Rachh2016}. Jumping straight into the
details of the Helmholtz single layer potential (which can be found in
any of the above references), the quadrature error at an expansion
center $\v x_0$ is then given by
\begin{align}
  e_Q(\v x) = \sum_{l=-p}^p J_l(\omega |\v x-\v x_0|) e^{-il\theta_x} %
  (\alpha_l - \tilde\alpha_l).
  \label{eq:helmholtz_quad_error}
\end{align}
The expansion coefficients are computed as
\begin{equation}
  \alpha_\ell = \frac{i}{4} \int_{\Gamma} 
  H_\ell^{(1)}( \omega |\v y-\v x_0|) e^{i\ell\theta_y}
  \sigma(\v y) \d S_{\v y}, \quad \ell \in \{-p \dots p\},
  \label{eq:coeff_helmholtz}
\end{equation}
where $\theta_y$ is the polar angle of $\v y-\v x_0$. Here
$H_\ell^{(1)}$ is the Hankel function of the first kind and order
$\ell$, defined as
\begin{align}
  H_\ell^{(1)}(r) = J_\ell(r) + iY_\ell(r),
\end{align}
where $J_\ell$ is the Bessel function and $Y_\ell$ is the Neumann
function, both of order $\ell$. As $r \to 0$, $J_\ell$ goes smoothly
to zero, while $Y_\ell$ is singular. For our analysis based on residue
calculus, we need the leading order term of the singularity, which is
given by the power series of the Neumann function
\cite[\S10.8]{NIST:DLMF},
\begin{align}
  Y_\ell(r)=-\frac{2^\ell(\ell-1)!}{\pi}r^{-\ell}
   + \ordo{r^{-\ell+2}} , \quad \ell > 0.
   \label{eq:neumann_leading}
\end{align}

We now let $\Gamma$ be the interval $x \in [-1,1]$ and $\v x_0=(a,
b)$, $b>0$, such that $\v y=(x,0)$ and $|\v y-\v x_0| = \sqrt{(x-a)^2
  + b^2}$. We can then write $|\v y-\v x_0|=|x-z_0|$, where
$z_0=a+ib$. Using \eqref{eq:neumann_leading}, the Hankel function in
the integrand of \eqref{eq:coeff_helmholtz} can then be approximated
as
\begin{align}
  H_\ell^{(1)}(\omega |x-z_0|) \approx -i\frac{2^\ell(\ell-1)!}{\pi
    \omega^{\ell} } |x-z_0|^{-\ell},
\end{align}
since we know that the residue at $z_0$ will be dominated by the
highest order pole. We futher define $\theta$ to be the angle between
$x-z_0$ and $a-z_0$ in the complex plane. The exponential factor in
the integrand is then
\begin{align}
  e^{i\ell\theta} = \left( \cos\theta + i\sin\theta \right)^\ell  =
  \left( \frac{x-a+ib}{|x-z_0|} \right)^\ell 
  = \left( \frac{x-\overline z_0}{|x-z_0|} \right)^\ell,
\end{align}
such that
\begin{align}
  H_\ell^{(1)}(\omega |x-z_0|)e^{i\ell\theta} \approx
  -i\frac{2^\ell(\ell-1)!}{\pi
    \omega^{\ell} } \frac{(x-\overline z_0)^\ell}{|x-z_0|^{2\ell}} .
\end{align}
But
\begin{align}
  \frac{(x-\overline z_0)^\ell}{|x-z_0|^{2\ell}} = \frac{(x-\overline
    z_0)^\ell}{(x-z_0)^\ell(x-\overline z_0)^\ell} =
  \frac{1}{(x-z_0)^\ell} = f_\ell(x),
\end{align}
so the Helmholtz QBX kernel is in fact well approximated by the
complex kernel $f_\ell$ \eqref{eq:fp_gl} times a factor depending on
$\omega$ and $\ell$,
\begin{align}
  H_\ell^{(1)}(\omega |x-z_0|)e^{i\ell\theta} \approx
  -i\frac{2^\ell(\ell-1)!}{\pi \omega^{\ell} } f_\ell(x) .
\end{align}
Integrating this using the Gauss-Legendre rule, it follows directly
from Theorem \ref{thm:gl-complex} that
\begin{align}
  \left|\oprem\left[ H_\ell^{(1)}(\omega
      |x-z_0|)e^{i\ell\theta}\right]\right| \approx
  2 \left(\frac{2}{\omega}\right)^\ell
  \left| \frac{2n+1}{\sqrt{z_0^2-1}}
  \right|^{\ell-1} \frac{1}{|z_0+\sqrt{z_0^2-1}|^{2n+1}}.
\end{align}
We now let $\Gamma$ be a flat Gauss-Legendre panel of length $h$, and
$z_0$ be a point at a distance $r$ from $\Gamma$. Including the
variable change to $[-1,1]$ and following the simplifications in
\eqref{eq:gl_est_2d_simple} and \eqref{eq:est_with_sigma}, we can then
write
\begin{align}
  |\alpha_l - \tilde\alpha_l| = \frac{1}{4}%
  \left|\oprem\left[ H_\ell^{(1)}(\omega
      |x-z_0|)e^{i\ell\theta}\sigma\right]\right|  \lesssim
  \frac{h}{8n}
  \left(\frac{8n}{h\omega}\right)^\ell e^{-4nr/h}
  \|\sigma\|_{L^\infty(\Gamma)},
  \label{eq:helmholtz_coeff_est}
\end{align}
which is sharpest when $z_0$ lies on the line that extends normally
from the center of $\Gamma$. 

We are now ready to insert \eqref{eq:helmholtz_coeff_est} into
\eqref{eq:helmholtz_quad_error}. First, however, we assume that $|\v
x-\v x_0|=r$ and approximate the Bessel function using the first term
of its power series \cite[\S10.2]{NIST:DLMF},
\begin{align}
  J_l(\omega r) = \frac{1}{l!} \left(\frac{\omega r}{2}\right)^l +
  \ordo{%
    \frac{1}{(l+1)!} \left(\frac{\omega r}{2}\right)^{l+1}%
  }, \quad l>0 .
\end{align}
Discarding the negligible term corresponding to $l=0$, rewriting the
sum in \eqref{eq:helmholtz_quad_error} using positive $l$ (since
$|J_l|=|J_{-l}|$ and $|Y_l|=|Y_{-l}|$), and applying Stirling's
formula $l!\approx\sqrt{2\pi l}(l/e)^l$, we finally get the QBX
quadrature error for the Helmholtz kernel,
\begin{align}
  |e_Q| \lesssim \frac{1}{4\sqrt{2\pi}}\frac{h}{n}
  \sum_{l=1}^p %
  \frac{1}{\sqrt{l}}%
  \left(\frac{4nre}{hl}\right)^\ell e^{-4nr/h}
  \|\sigma\|_{L^\infty(\Gamma)} .
  \label{eq:est_helmholtz_final}
\end{align}
Remarkably, this estimate is identical (up to a constant) to that of
the Laplace single layer potential in both two
\eqref{eq:2d_qbx_simpler_quad_err} and three
\eqref{eq:est_qbx_patch_simpler} dimensions, even though the
underlying PDE is different. It works just as well as the previous
estimates, though the nature of our simplifications makes the accuracy
better for small $r$ and $\omega$. For $r/h = 1/2$ (which is quite
large), the estimate seems to have acceptable performance at least up
to $\omega h=20$.

%%% Local Variables: ***
%%% mode:latex ***
%%% TeX-master: "quadrature.tex"  ***
%%% End: ***

\section{Conclusions}

The model kernels which we have considered, $f_p(z,w)=|z-w|^{-p}$ and
$g_p(\v x,\v y)=|\v x-\v y|^{-2p}$, can be found in two and three
dimensional BIE applications in general (for small $p$), and in QBX in
particular. Using the method of contour integrals, we have in section
\ref{sec:quadrature-errors} derived accurate estimates (Theorems
\ref{thm:gl-complex}--\ref{thm:trapz-cartesian}) of the quadrature
errors when integrating these kernels close to a singularity using the
$n$-point Gauss-Legendre and trapezoidal quadrature rules (on $[-1,1]$
and the unit circle, respectively). These estimates are not in the
form of bounds, which is what one classically seeks in numerical
analysis. Instead, they are asymptotic equalities valid in the limit
$n\to\infty$. Their key feature is that they predict the magnitude of
the errors surprisingly well also for small $n$, as we have seen
throughout this paper. Extracting the dominating features of our
estimates, we arrive at the summary presented in Table
\ref{tab:est_summary}, which shows how the quadrature errors depend on
$n$, $b$ and $p$.

\begin{table}[h]
  \centering
  \bgroup
  \def\arraystretch{1.5}
  \begin{tabular}[c]{l | c | c}
    $\ordo{ \|h\|_\infty^{-1} \oprem[h] (p-1)!}$ & Gauss-Legendre & Trapezoidal \\
    \hline
    Complex kernel $f_p$ & ${(2nb)^pe^{-2nb}}$ & ${(nb)^p(1+b)^{-n}}$ \\
    \hline
    Cartesian kernel $g_p$ & ${(nb)^pe^{-2nb}}$ & $(nb/2)^p (1+b)^{-n}$
  \end{tabular}
  \egroup
  \caption{Order of magnitude of relative error, scaled with common factor $(p-1)!$, for $h=f_p$ and $h=g_p$. Computed from estimates \eqref{eq:gl_est_2d_simple},~\eqref{eq:gl_est_a0},~\eqref{eq:thm-trapz-complex} and \eqref{eq:thm-trapz-cartesian}, using $\ordo{\|f_p\|_\infty}=b^{-p}$ and $\ordo{\|g_p\|_\infty}=b^{-2p}$, where $b$ is the distance from $\Gamma$ to the singularity.}
  \label{tab:est_summary}
\end{table}

By applying suitable generalizations, we have in section
\ref{sec:applications} of this paper demonstrated how to use our
results when working with BIEs and QBX. These results are
approximations rather than asymptotic equalities, but nevertheless
provide error estimates which are accurate enough to be used for
parameter selection in practical applications. Explicit estimates have
been developed for Gauss-Legendre panels in two dimensions, and for
flat Gauss-Legendre patches and spheroids (appendix
\ref{sec:qbx-spheroid}) in three dimensions. Though these results may
prove useful by themselves, the more important result is that the
methodology used to derive them can be generalized to other kernels
and geometries in a straightforward manner.

The QBX quadrature error $e_Q$ for the Laplace single layer potential
in two \eqref{eq:qbx_errors_2d} and three \eqref{eq:def_coeff_3d}
dimensions is well captured by our estimates
\eqref{eq:2d_qbx_simpler_quad_err} and
\eqref{eq:est_qbx_patch_simpler}, when using Gauss-Legendre as the
underlying quadrature. For the Helmholtz single layer potential in two
dimensions, the corresponding estimate is
\eqref{eq:est_helmholtz_final}. A common feature of these estimates is
that most of their dependence on the
parameters\footnote{Gauss-Legendre quadrature order $n$, expansion
  order $p$, expansion center distance $r$ and integration domain size
  $h$.}  $n$, $p$, $r$ and $h$ is captured by
\begin{align}
  |e_Q| \sim \frac{h}{n} \sum_{l=1}^p \frac{1}{\sqrt{l}} \left(
    \frac{4nre}{hl} \right)^{l} e^{-4nr/h}
  \|\sigma\|_{L^\infty(\Gamma)} .
\end{align}
This is in turn very similar to the results for the spheroid
\eqref{eq:qbx_spheroid_final_simpler} derived in appendix
\ref{sec:qbx-spheroid}, and to the bound for the double layer
potential in two dimensions derived by Barnett \cite[Thm
3.2]{Barnett2014}. Taken together, these expressions provide a better
understanding of the QBX quadrature error, which appears to have
similar behavior independent of kernel and dimension. Previous
results by Epstein et al. \cite{Epstein2013} for the truncation error
(\ref{eq:trunc_err_bound_2d},\ref{eq:trunc_err_bound_3d}) provide
insight into the truncation error and establish the analytic
foundation for the method. Putting these together with the results
presented here, it is now possible to understand the complete error
spectrum when working with QBX both in two and three dimensions.

We have in this paper focused on estimates for the harmonic single
layer potential in two and three dimensions. We have also shown how to
derive an estimate for the Helmholtz single layer potential in two
dimensions, and derivation of similar estimates for other kernels
should follow along the same lines.

\clearpage
\appendix
\begin{appendices}

\counterwithin{figure}{section}

\section{QBX quadrature error on spheroid}
\label{sec:qbx-spheroid}

\begin{figure}[h]
  \centering
  \includegraphics{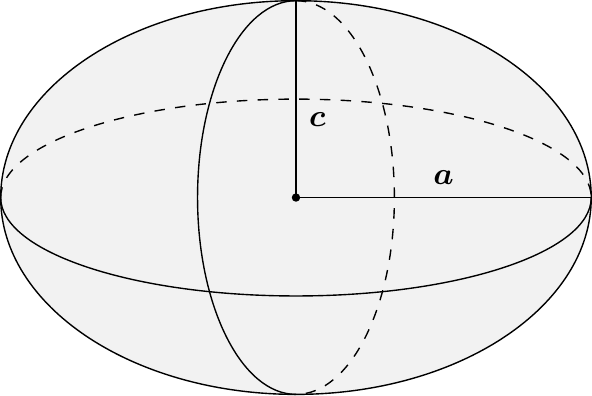}                
  \caption{A spheroid with semi-axes $a$ and $c$.}
  \label{fig:spheroid}
\end{figure}

We will now carry out essentially the same analysis as for the
Gauss-Legendre patch in section \ref{sec:gauss-legendre-patch}, but
for the specific case when the surface $\Gamma$ is a spheroid (see
Figure \ref{fig:spheroid}), defined as
\begin{align}
  \frac{x^2+y^2}{a^2} + \frac{z^2}{c^2} = 1.
\end{align}
The spheroid is denoted ''oblate'' when $a>c$, and ''prolate'' when $c>a$. Using a
parametrization $\{s\in[0,\pi], t\in[0,2\pi)\}$, we can describe the surface as
\begin{align}
  x &= a \cos s \sin t, \\
  y &= a \sin s \sin t, \\
  z &= c \cos t.
\end{align}
A straightforward quadrature for this surface is to use a tensor product quadrature with
the trapezoidal rule in the (periodic) $s$-direction and the Gauss-Legendre rule in the
$t$-direction. These two quadrature rules will then operate along cross sections that are
circles and half-ellipses, as shown in Figure \ref{fig:cross_sections}. 

To estimate the 3D QBX quadrature error as formulated in
\eqref{eq:quad_err_simple}, we will work with the Legendre kernel
$\legkernel_l$ \eqref{eq:legkernel} on the spheroid. Considering the
quadrature of $\legkernel_l$ \eqref{eq:legkernel} on $\Gamma$ with the
point $\v x_0$ at a distance $r$ away, we can expect the largest
quadrature errors to come from the two cross sections (a circle in $s$
and a half-ellipse in $t$) that are closest to $\v x_0$. To estimate
the total error $\oprem^2[\legkernel_l]$ on $\Gamma$ we first
need to estimate $\opremvar{s}\tzsup[\psi_l]$ and
$\opremvar{t}\glsup[\psi_l]$ on these cross sections, with $\psi_l$ as
defined in \eqref{eq:psi} and T and G denoting the trapezoidal and
Gauss-Legendre quadrature errors. Once we have those estimates, we can
approximate the integrals $\opint_t\opremvar{s}\tzsup[\psi_l]$ and
$\opint_s\opremvar{t}\glsup[\psi_l]$.  It then follows from the
properties of $\psi_l$ \eqref{eq:spharm_lt_psi} and our approximation
of the surface quadrature error \eqref{eq:surface_quad_remainder} that
\begin{align}
  |\oprem^2 [\legkernel_l]| \approx | \opint_t\opremvar{s}\tzsup [\psi_l] | + |
  \opint_s\opremvar{t}\glsup [\psi_l] | .
  \label{eq:est_3dqbx_spheroid}
\end{align}

\begin{figure}[htbp]
  \centering
  \begin{subfigure}[b]{6cm}
    \includegraphics{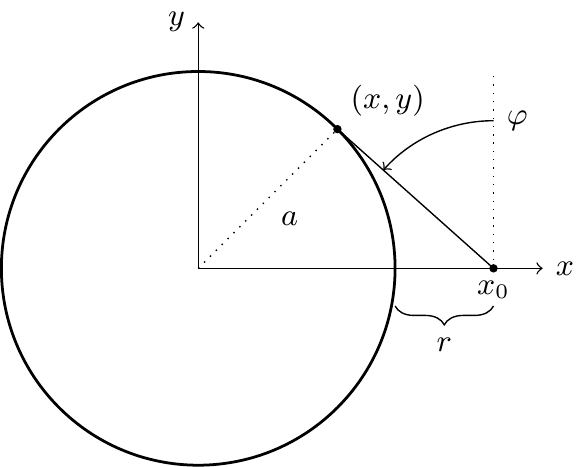}
    \caption{Circle at $z=0$, $\v x_0$ on the $x$-axis.}
    \label{fig:cross_section_circle}
  \end{subfigure}
  \hfill
  \begin{subfigure}[b]{5.5cm}
    \includegraphics{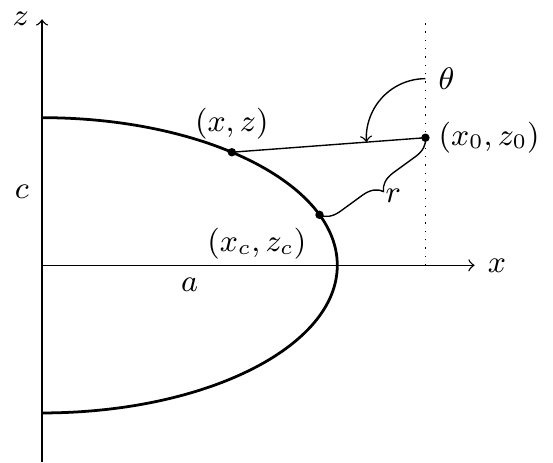}      
    \caption{Half-ellipse at $y=0$. }
    \label{fig:cross_section_ellipse}    
  \end{subfigure}
  \caption{Cross sections of a spheroid with $\v x_0$ at a distance $r$ away.}
  \label{fig:cross_sections}
\end{figure}

\paragraph{Trapezoidal rule on circular cross section.}
For the trapezoidal rule operating along the circular cross sections of the ellipse, we
can expect the largest quadrature error at an expansion center outside the middle
(equatorial) cross section, which has radius $a$ (Figure \ref{fig:cross_section_circle}),
since that is where the node distribution is sparsest. That cross section is described by
\begin{align}
  (x(s),y(s))=(a\cos s, a\sin s), \quad s \in [0,2\pi),
\end{align}
and we can without loss of generality set $\v x_0=(a+r,0,0)$, such that
\begin{align}
  \psi_l(s) = 
  \frac{\spharmconst (a\sin s)^l}{\left( (a\cos s-(a+r))^2+(a\sin s)^2 \right)^{l+\frac 12}} .
\end{align}
Rewriting the integral (where now $|\v x'(s)|=a$),
\begin{align}
  \int_0^{2\pi} \psi_l(s) a\d s
  =
  \frac{\spharmconst}{a^l} \int_0^{2\pi}
  \frac{\sin^ls \d s}{\left( (\cos s-(1+r/a))^2+\sin^2 s \right)^{l+\frac 12}},
\end{align}
we can use Theorem \ref{thm:trapz-cartesian} with $b=r/a$, $p=l+\frac{1}{2}$ and the
factorial generalization \eqref{eq:factorial_to_gamma} to estimate the quadrature error as
\begin{align}
  |\opremvar{s}\tzsup[\psi_l]| \simeq 
  \frac{4\pi a \spharmconst}{\Gamma(l+\frac 12) (2a)^l \sqrt{r(r+2a)}} 
  \frac{n^{l-\frac 12}}{(1+r/a)^{l+n}},
  \label{eq:est_3d_qbx_trapz_circle}
\end{align}
where we have used that the contribution from the numerator at the poles is
\begin{align}
  |\sin(\pm i \log(1+b))|^l = \left(\frac{b^2+2b}{2b+2}\right)^l .
\end{align}
To get the surface quadrature error we need to integrate this error across all cross
sections of the spheroid. Knowing that the quadrature error is local due to its fast
spatial decay, we simplify by integrating on the extension of the circular cross section
into an infinite cylinder of radius $a$, on which we integrate the estimate
\eqref{eq:est_3d_qbx_trapz_circle} after substituting $r \to \sqrt{z^2+r^2}$. We consider
only the factor $(1+ a^{-1} \sqrt{r^2 + z^2})^{-n}$, and expand the square root around
$z=0$. This leaves us with the integral
\begin{align}
  \int_{-\infty}^\infty \left(1+\frac{1}{a}\left(r
      + \frac{z^2}{2r} \right)\right)^{-n} \d z
  = \frac{\sqrt{2\pi r(a+r)}}{(1+r/a)^{n} }
  \frac{\Gamma(n-\frac 12)}{\Gamma(n)}.
\end{align}
For large $n$ we have
\begin{align}
  \frac{\Gamma(n-\frac 12)}{\Gamma(n)} \approx n^{-1/2},
\end{align}
so we can interpret the result as the quadrature error coming from a strip of width
$\sqrt{2\pi r(a+r)/n}$. Inserted into \eqref{eq:est_3d_qbx_trapz_circle}, this gives us
the desired estimate for the trapezoidal rule quadrature error on the sphere,
\begin{align}
  | \opint_t\opremvar{s}\tzsup[\psi_l] |
  &\approx \frac{4\pi a \spharmconst}{\Gamma(l+\frac 12) (2a)^l} 
  \sqrt{\frac{\pi (a+r)}{n(a+r/2)}}
  \frac{n^{l-\frac 12}}{(1+r/a)^{l+n}} .
\end{align}
We simplify this under the assumption $r \ll a$ to arrive at our final error estimate for
the trapezoidal rule error when integrating $\psi_l$ on a spheroid:
\begin{align}
  |\opint_t\opremvar{s}\tzsup[\psi_l]| \approx 
  \frac{\spharmconst}{\Gamma(l+\frac 12)}
  \frac{4\pi^{3/2} a}{n}
  \left( \frac{n}{2a} \right)^l
  \frac{1}{(1+r/a)^{l+n}}
  \label{eq:est_psi_circle}
\end{align}

\paragraph{Gauss-Legendre rule on half-elliptical cross section.}

When considering the Gauss-Legendre rule, we can without loss of generality limit
ourselves to the cross section where $x>0$ and $y=0$ (Figure
\ref{fig:cross_section_ellipse}), given by
\begin{align}
  \v x(t) = (x(t),z(t)) = (a\sin t,c\cos t), \quad s \in [0,\pi].
\end{align}
On this curve the outward (non-unit) normal is given by
\begin{align}
  \v n(t) = (c\sin t, a\cos t) .
\end{align}
Let $\v x_0 = (x_0, z_0)$ be a point at a distance $r$ from the curve, and let $(x_c,z_c)
= (x(t_c),z(t_c))$ be the point on the curve that is closest to $\v x_0$ (s.t. $|\v x_0 -
\v x_c|=r$). Then
\newcommand{\ncnorm}{|\v n(t_c)|}
\begin{align}
  x_0 &= \left( a
    + \frac{r c}{\ncnorm} \right) \sin t_c, \\
  z_0 &= \left(c \ + \frac{r a}{\ncnorm} \right) \cos t_c,
\end{align}
where $\ncnorm = \sqrt{a^2\cos^2t_c + c^2\sin^2t_c}$. We now want to estimate to
quadrature error for the integral
\begin{align}
  \opint[\psi_l] &= \int_0^\pi \psi_l(t) |\v x'(t)|\d t,
\end{align}
where
\begin{align}
  \psi_l(t) &= 
  \frac{\spharmconst (c\cos t)^l}{
    \left( (a\sin t-x_0)^2+(c\cos t - z_0)^2 \right)^{l+\frac 12}}, \\
  |\v x'(t)| &= \sqrt{a^2\cos^2t + c^2\sin^2t}.
\end{align}
Series expanding the denominator to second order around $t_c$, we get
\begin{align}
  \psi_l(t) &\approx
  \frac{\spharmconst (c\cos t - z_0)^l}{
  \left( k(t_c)^2(t-t_c)^2 + r^2\right)^{l+\frac 12} },
\end{align}
where
\begin{align}
  k(t_c) = \sqrt{\frac{acr + \ncnorm^3}{\ncnorm}}.
\end{align}
Changing variables from $t \in [0,\pi]$ to $u \in [-1,1]$, we can write the integral as
\begin{align}
  \opint[\psi_l] \approx \left( \frac{2}{\pi} \right)^{2l}\frac{\spharmconst}{k^{2l+1}}
  \int_{-1}^1 
  \frac{\left( c \cos t(u) - z_0 \right)^l |\v x'(t(u))|}
  {\left( (u-u_r)^2 + u_i^2 \right)^{l+\frac 12}}  \d u
\end{align}
where the integrand now has poles at $u_0$ and $\overline u_0$, 
\begin{align}
  u_0 &= u_r + i u_i, \\
  u_r &= (2 t_c - \pi)/ \pi,\\
  u_i &= \frac{2 r}{\pi k(t_c)}  .
\end{align}
The problem is now in the form considered in section \ref{sec:gl-3d-kernel}, such that we
can estimate the quadrature error. Evaluating the numerator at the poles $t_0 = t_c +
ir/k$, we get a contribution that we can approximate as
\begin{align}
  |c \cos(t_c+ir/k) - z_0|^l |\v x'(t_c+ir/k)| \approx 
  r^l |\v x'(t_c)| = 
  r^l \ncnorm .
\end{align}
Insertion into the quadrature error estimate \eqref{eq:gl_est_final} gives, after
some refactoring,
\begin{align}
  |\opremvar{t}\glsup[\psi_l]| \simeq
  \frac{\spharmconst}{\Gamma(l+\frac{1}{2})}
  \frac{ 
    2\pi^{3/2}\ncnorm 
  }{
    (\pi k)^l \sqrt{rk}
  }
  \left| \frac{2n+1}{\sqrt{u_0^2-1}} \right|^{l-\frac 12}
  \left| u_0+\sqrt{u_0^2-1} \right|^{-(2n+1)},
  \label{eq:half_ellipse_1}
\end{align}
where $u_0 = u_0(t_c)$ and $k=k(t_c)$.  This cumbersome expression accurately captures
both the order of magnitude and convergence rate of the error for all variations of the
geometrical quantities $a,c,r,t_c$. The rate of the exponential convergence in $n$ is
determined by the base
\begin{align}
  \beta(t_c) = \left| u_0+\sqrt{u_0^2-1} \right|^{-1} < 1 .
\end{align}
The closer the base is to unity, the slower the convergence. To find an upper bound of the
error estimate for a given geometry $a,c$ and distance $r$, we need to find the point
\begin{align}
  t^* = \operatornamewithlimits{argmax}_{t_c \in [0,\pi]} \beta(t_c) .
  \label{eq:tstar}
\end{align}
By studying the convergence rate for varying aspect ratios $a/c$ and $t_c\in[0,\pi]$, as
illustrated in Figure \ref{fig:ellipse_x0}, we can divide the problem of determining $t^*$
into two cases:
\begin{enumerate}[(i)]
\item $a \le c$ \\
  The base $\beta$ has a maximum at $t^*=\pi/2$, i.e. at the middle of the interval, and
  our variables then simplify to
  \begin{align}
    |\v n(t^*)| &= c,\\
    k &= \sqrt{c^2 + ar}, \\
    u_0 &= ib \\
    b &= 2 r/\pi\sqrt{c^2 + ar}.
  \end{align}
  Assuming $b$ small and discarding $\ordo{b^2}$ terms, we can simplify the
  resulting error estimate to 
    \begin{align}
      |\opremvar{t}\glsup[\psi_l]| &\lesssim
      \frac{ \spharmconst
      }{
        \Gamma(l+\frac 12)
      }
      \frac{\pi^2cb}{r^{3/2}}
      \left( \frac{nb}{r}\right)^{l-1/2} e^{-2bn}.
      \label{eq:half_ellipse_prol}
    \end{align}
    Also approximating $k \approx c$ (since $ar \ll c^2$), such that
    $b\approx 2r/\pi c$, results in a very compact expression:
    \begin{align}
     |\opremvar{t}\glsup[\psi_l]| &\lesssim
      \frac{ \spharmconst}{\Gamma(l+\frac 12)}
      \frac{2\pi}{\sqrt{r}}
      \left( \frac{2n}{\pi c}\right)^{l-\frac 12}
      e^{-4nr/\pi c} .
      \label{eq:half_ellipse_compact}
    \end{align}

  \item $a > c$ \\
    The base $\beta$ is symmetric about $\pi/2$ with two maxima of equal magnitude in the
    interval, as shown in Figure \ref{fig:ellipse_x0_map}. Here we have no closed form
    expression for $t^*$, so to find the worst case error estimate for a given $a,c,r$ we
    have to solve \eqref{eq:tstar} numerically and use the results in
    \eqref{eq:half_ellipse_1}.
\end{enumerate}

\begin{figure}[htbp]
  \centering
  \begin{subfigure}[b]{0.4\textwidth}
    \raisebox{0.2\textwidth}{
      \includetikz{\textwidth}{0.75\textwidth}{ellipse_x0_dist}
    }
    \caption{Position of $\v x_0$ relative to the half-ellipse for $t_c\in[0,\pi]$.}
    \label{fig:ellipse_x0_dist}
  \end{subfigure}
  \hfill
  \begin{subfigure}[b]{0.5\textwidth}
      \includetikz{\textwidth}{0.8\textwidth}{ellipse_x0_map}
      \caption{Base of exponential convergence as a function of $t_c$ for different aspect
        ratios.}
      \label{fig:ellipse_x0_map}
  \end{subfigure}
  \caption{}
  \label{fig:ellipse_x0}
\end{figure}

To get the error contribution from the entire surface, we would need
to integrate the estimate as the cross section of Figure
\ref{fig:cross_section_ellipse} is rotated around the $z$-axis. As an
approximation, we instead extend the cross section to infinity in the
positive and negative $y$-directions, and integrate the estimates
\eqref{eq:half_ellipse_1} and \eqref{eq:half_ellipse_compact} on the
resulting surface, with $r \to \sqrt{y^2+r^2}$. Repeating the process
used for the Gauss-Legendre patch in section
\ref{sec:gauss-legendre-patch} and for the circular cross section
above, it can be shown that the integrated error is approximately
equal to that from a strip of width $\pi\sqrt{\frac{r}{2n}
  \max(a,c)}$. Putting this together with the above results, we arrive
at the final estimate for the Gauss-Legendre error when integrating
$\psi_l$ on the spheroid:
\begin{align}
  |\opint_s \opremvar{t}\glsup[\psi_l]| \approx
  |\opremvar{t}\glsup[\psi_l]|
  \pi\sqrt{\frac{r}{2n} \max(a,c)},
  \label{eq:est_psi_hellipse}
\end{align}
where $\oprem\glsup[\psi_l]$ is estimated using
\eqref{eq:half_ellipse_prol} or \eqref{eq:half_ellipse_compact} for $a
\le c$ and \eqref{eq:half_ellipse_1} otherwise. Using
\eqref{eq:half_ellipse_compact} for $a \le c$, the final expression
can be written
\begin{align}
  |\opint_s \opremvar{t}\glsup[\psi_l]| \approx
  \frac{ \spharmconst}{\Gamma(l+\frac 12)}
   \frac{\pi^{5/2}c}{n}
  \left( \frac{2n}{c\pi}\right)^{l}
  e^{-4nr/c\pi} .
  \label{eq:est_psi_hellipse_compact}
\end{align}

\paragraph{Total error}

\begin{figure}[htbp]
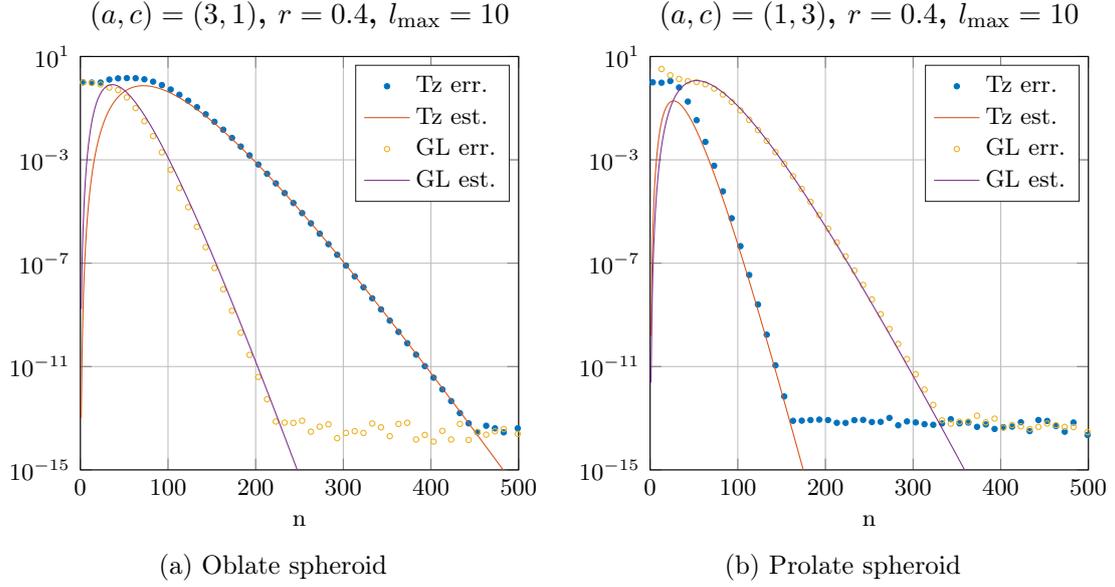

  \centering
  \tikzset{mark size=1}
  \begin{subfigure}[b]{0.49\textwidth}
    \includetikz{\textwidth}{\textwidth}{spheroid_conv_1}
    \caption{Oblate spheroid}
  \end{subfigure}
  \hfill
  \begin{subfigure}[b]{0.49\textwidth}
    \includetikz{\textwidth}{\textwidth}{spheroid_conv_2}
    \caption{Prolate spheroid}
  \end{subfigure}
  \caption{Legendre kernel quadrature error $\oprem^2 [\legkernel_l]$
    for the $n$-point Gauss-Legendre (GL) and trapezoidal (Tz) rules
    on one oblate and one prolate spheroid, when the quadrature in the
    other direction is fully resolved. Estimates computed using
    \eqref{eq:est_psi_circle}, \eqref{eq:est_psi_hellipse} and
    \eqref{eq:est_psi_hellipse_compact}.}
  \label{fig:spheroid_conv}
\end{figure}

To test the estimates for the Legendre kernel $\legkernel_l$ on a
spheroid, we compute the error estimate \eqref{eq:est_3dqbx_spheroid}
using \eqref{eq:est_psi_circle}, \eqref{eq:est_psi_hellipse} and
\eqref{eq:est_psi_hellipse_compact}. To isolate the Gauss-Legendre and
trapezoidal rule errors, we vary the number of nodes $n$ in one
direction at a time, while the number of nodes in the other direction
is set so large that the quadrature in that direction is fully
resolved. The results, computed for one oblate and one prolate
spheroid up to $l=10$ and shown in Figure \ref{fig:spheroid_conv},
confirm that our estimates are accurate.

Putting the results together in the same way as for the Gauss-Legendre
patch of section \ref{sec:gauss-legendre-patch}, the estimate for the
QBX quadrature error on a spheroid becomes
\begin{align}
\begin{split}
  |e_Q(\v x_t)| \lesssim |\sigma(\v x_t)| \sum_{l=0}^p 
  r^l 
  \left( 
    | \opint_t\opremvar{s}\tzsup [\psi_l] | + |
    \opint_s\opremvar{t}\glsup [\psi_l] |
  \right) .
\end{split}
\label{eq:qbx_spheroid_final}
\end{align}
An example of this estimate is shown in Figure
\ref{fig:3dqbx_spheroid_full}. Using the result
\begin{align}
  \frac{\spharmconst}{\Gamma(l+1/2)} \approx C l^{-1/2}
  \left(\frac{2e}{l}\right)^l,
\end{align}
and applying the (quite crude) approximation $(1+r/a)^{n+l} \approx
e^{nr/a}$ to the trapezoidal rule results, we can see that
\eqref{eq:qbx_spheroid_final} for $a \le c$ behaves like
\begin{align}
\begin{split}
  |e_Q| \lesssim C
    \sum_{l=1}^p  \Bigg[ &
    \frac{a}{n\sqrt l} \left(\frac{nre}{al}\right)^l e^{-nr/a} 
    +  \frac{c}{n\sqrt l} \left(\frac{4nre}{\pi cl}\right)^l e^{-4nr/c\pi}
  \Bigg] \|\sigma\|_{l^\infty(\Gamma)}.
  \label{eq:qbx_spheroid_final_simpler}
\end{split}
\end{align}
This is similar to the results for the Gauss-Legendre patch
\eqref{eq:est_qbx_patch_simpler}.

\begin{figure}[htbp]
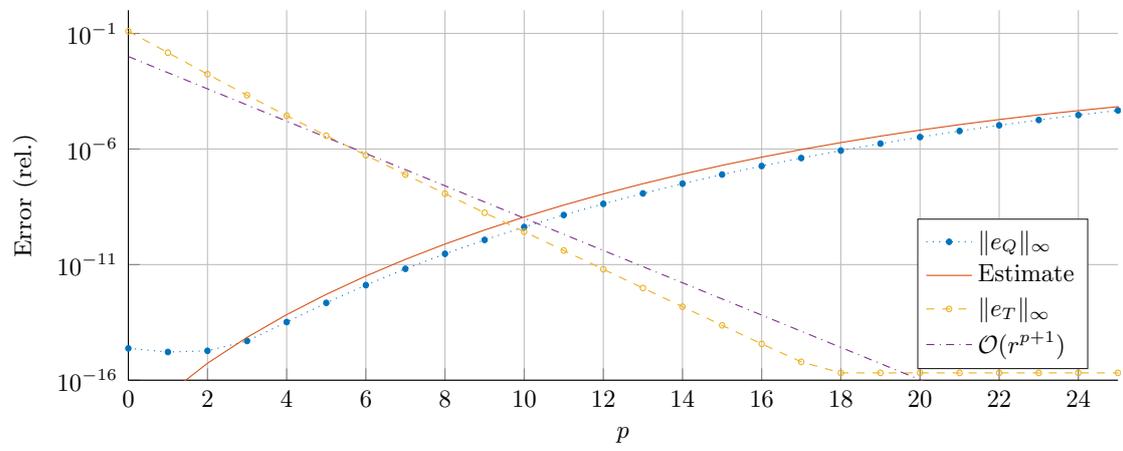

  \centering
  \tikzset{mark size=1}
  \includetikz{\textwidth}{0.4\textwidth}{spheroid_full_qbx}
  \caption{QBX error components \eqref{eq:qbx_errors_3d_trunc} and
    \eqref{eq:qbx_errors_3d_quad} measured on the surface of a
    spheroid, $a=1,c=2$, with $\sigma=1$, $r=0.2$ and a $400\times
    200$ grid ($n_t \times n_s$). Estimate computed using
    \eqref{eq:qbx_spheroid_final}.}
  \label{fig:3dqbx_spheroid_full}
\end{figure}

\end{appendices}

\clearpage
\ifsvjour
\bibliographystyle{spmpsci} 
\else
\bibliographystyle{jabbrv_abbrv_doi}
\fi
\bibliography{library}

\end{document}